\DeclareMathOperator*{\interior}{interior}
\newcommand{\rec}[1]{\textup{rec.cone}(#1)}
\newcommand{\recdual}[1]{\textup{rec.cone}^*(#1)}
\newcommand{\subfcts}[1]{\mathcal{F}_{#1}}
\newtheorem{proposition}{Proposition}
\newtheorem{theorem}{Theorem}
\newtheorem{lemma}{Lemma}
\newtheorem{remark}{Remark}
\journal{Discrete Optimization}
\begin{document}
\begin{frontmatter}


\title{Some cut-generating functions for second-order conic sets}


\author[mainaddress]{Asteroide Santana\corref{mycorrespondingauthor}
}
\ead{asteroide.santana@gatech.edu}

\author[mainaddress]{Santanu S. Dey \corref{mycorrespondingauthor}
}
\ead{santanu.dey@isye.gatech.edu}

\cortext[mycorrespondingauthor]{Corresponding authors}



\address[mainaddress]{ISyE, Georgia Institute of Technology, 765 Ferst Drive NW, Atlanta, GA 30332-0205, USA}

\begin{abstract}
In this paper, we study cut generating functions for conic sets. Our first main result shows that if the conic set is bounded, then cut generating functions for integer linear programs can easily be adapted to give the integer hull of the conic integer program. Then we introduce a new class of cut generating functions which are non-decreasing with respect to second-order cone. We show that, under some minor technical conditions, these functions together with integer linear programming-based functions are sufficient to yield the integer hull of intersections of conic sections in $\mathbb{R}^2$.
\end{abstract}

\begin{keyword}
integer conic programming \sep integer hull of conic set \sep cut generating function \sep subadditive function \sep second-order cone.
\end{keyword}

\end{frontmatter}


\section{Introduction: Subadditive dual of conic integer programs}

A natural generalization of linear integer programming is \emph{conic integer programming}.  Given a  \emph{regular} cone $K \subseteq \mathbb{R}^n$, that is a cone that  is pointed, closed, convex, and full dimensional, we can define a conic integer program as:
\begin{eqnarray}\label {eq:conicIP}
\begin{array}{rl}
\textup{inf} &c^{\top} x\\
\textup{s.t.} & Ax - b \in K\\
& x \in \mathbb{Z}^n_+,
\end{array}
\end{eqnarray}
where $A \in \mathbb{R}^{m \times n}, c\in \mathbb{R}^n$ and $b \in \mathbb{R}^m$. As is standard, we will henceforth write the constraint $Ax - b \in K$ as $Ax  \succeq_K b$, where we use the notation that $u\succeq_K v$ if and only if $u - v\in K$. In the case where $K$ is the non-negative orthant, that is $K = \mathbb{R}^m_{+}$, the conic integer program is a standard linear integer program.

A natural way to generate cuts for conic integer programs is via the notion of cut-generating functions~\cite{ConfortiCDLM15}. Consider a function $f: \mathbb{R}^m \rightarrow \mathbb{R}$ that satisfies the following:
\begin{enumerate}
\item $f$ is \emph{subadditive}, that is $f (u) + f(v) \geq f(u + v)$ for all $u, v \in \mathbb{R}^m$,
\item $f$ is \emph{non-decreasing with respect to $K$}, that is $f(u) \geq f(v)$ whenever $u \succeq_K v$,
\item $f(0) = 0$.
\end{enumerate}
Then it is straightforward to see that the inequality 
\begin{eqnarray}\label{eq:cut}
\sum_{j = 1}^n f(A^j) x_j \geq f(b),
\end{eqnarray}
is valid for the conic integer program (\ref{eq:conicIP}), where $A^j$ is the $j$-th column of $A$. We denote the set of functions satisfying (1.), (2.) and (3.) above as $\subfcts{K}$.


In the paper~\cite{DiegoSantanu2012}, it was shown that, assuming a technical `\emph{discrete Slater}' condition 
holds, the closure of the convex hull of the set of integer feasible solutions to (\ref{eq:conicIP}) is described by inequalities of the form (\ref{eq:cut}) obtained from $\subfcts{K}$. This result from~\cite{DiegoSantanu2012} generalizes result on subadditive duality of linear integer programs~\cite{Jeroslow79,JOHNSON1973,Johnson1979,WOLSEY1981}, that is inequalities (\ref{eq:cut}) give the convex hull of (\ref{eq:conicIP}) when $K =\mathbb{R}^m_{+}$ and the constraint matrix $A$ is rational. Also see~\cite{Karzan2015,Kılınç-Karzan2016} for related models and results.

In the case where $K = \mathbb{R}^m_{+}$ and assuming $A$ is rational, a lot more is known about the subset of functions from  $\subfcts{\mathbb{R}^m_{+}}$ that are sufficient to describe the convex hull of integer solutions (also called as the integer hull). For example, these functions have a constructive characterization using the Chv\'atal-Gomory procedure~\cite{BlairJ82}, it is sufficient to consider functions that are applied to every $2^n$ subset of constraints at a time (see \cite{Schrijverbook1}, Theorem 16.5), or for a fixed $A$ there is a finite list of functions independent of $b$ that describes the integer hull~\cite{WOLSEY1981}.

The main goal of this paper is to similarly better understand structural properties of subsets of functions from $\subfcts{K}$ that are sufficient to produce the integer hull of the underlying conic representable set $\{x\in\mathbb{R}^n\,|\, Ax   \succeq_K b \}$.

\section{Main results} 
We will refer to the \emph{dual cone} of a cone $K$ as $K^*$ which we remind the reader is the set $K^*:=\{y\in\mathbb{R}^m\,|\, y^{\top}x \geq 0 \ \forall x \in K\}.$ Given a positive integer $m$, we denote the set $\{1, \dots, m\}$ by $[m]$. And given a subset $X$ of $\mathbb{R}^n$ we denote its integer hull by $X^I$. 

\subsection{Bounded sets}

Given a regular cone $K$ we call as \textit{linear composition} the set of functions $f$ obtained as follows: Let the vectors $w^1, w^2, \dots,w^p\in K^*$ and the function $f:\mathbb{R}^m\to\mathbb{R}$ be given by
\begin{align}
f(v)=g((w^1)^{\top}v, (w^2)^{\top}v, \cdots, (w^p)^{\top}v),\label{Linearcomposite}
\end{align}
where $g\in\subfcts{\mathbb{R}^p_+}$ satisfies $g(u) = -g(-u)$ for all $u\in\mathbb{R}^p$.
It is straightforward to see that linear composition functions belong to $\subfcts{K}$ and also satisfy  $f(v) = -f(-v)$ for all $v\in\mathbb{R}^m$, which implies that $f$ generates valid inequalities of the form (\ref{eq:cut}) even when the variables are not required to be non-negative. Our first result describes a class of conic sets for which linear composition functions are sufficient to produce the convex hull.
\begin{theorem}\label{thm:PolyhedralOuterApproxOfBoundedConicCuts}
Let $K\subseteq \mathbb{R}^m$ be a regular cone. Consider the conic set
$
T=\{x\in\mathbb{R}^n\, | \ Ax\succeq_K b\},
$
where $A\in\mathbb{R}^{m\times n}$ and $b\in\mathbb{R}^m$.
Assume $T$ has nonempty interior. Let $\pi^{\top}x \geq \pi_{0}$ be a valid inequality for $T^I$ where $\pi\in\mathbb{Z}^n$ is non-zero. Assume $B := \{x\in T\, | \ \pi^{\top}x \leq \pi_0\}$ is nonempty and bounded. Then, for some natural number $p \leq 2^n$, there exist vectors $y^1,y^2,\dots,y^{p}\in K^*$ such that $\pi^{\top}x \geq \pi_0$ is a valid inequality for the integer hull of the polyhedron
$
Q=\{x\in\mathbb{R}^n\,|\, (y^i)^{\top}A x \geq (y^i)^{\top}b, \ i\in [p]\},
$
where $(y^i)^{\top}A$ is rational for all $i\in[p]$.
\end{theorem}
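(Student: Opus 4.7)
The plan is to construct a rational outer polyhedron $\tilde{Q}\supseteq T$, described by finitely many inequalities of the form $(y^i)^\top Ax\ge (y^i)^\top b$ with $y^i\in K^*$ and $(y^i)^\top A\in\mathbb{Q}^n$, on which $\pi^\top x\ge\pi_0$ is already valid for the integer hull, and then to invoke Theorem~16.5 of~\cite{Schrijverbook1} to prune the description down to $p\le 2^n$ inequalities.

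To build $\tilde{Q}$ I would first use boundedness of $B$ to show that the recession cone of $T$, namely $\{d:Ad\in K\}$, is contained in $\{d:\pi^\top d>0\}\cup\{0\}$: a nonzero recession direction $d$ with $\pi^\top d\le 0$ would translate any point of $B$ along a ray inside $B$ and contradict boundedness. The compact set $\{d:\|d\|=1,\ \pi^\top d\le 0\}$ is therefore disjoint from $\{d:Ad\in K\}$; for each such $d$, $Ad\notin K$, hence some $y_d\in K^*$ satisfies $y_d^\top Ad<0$, and by extracting a finite subcover we obtain $y^1,\dots,y^{k_1}\in K^*$ whose induced polyhedral relaxation $P_1$ of $T$ has $P_1\cap\{\pi^\top x\le\pi_0\}$ bounded. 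This bounded slice contains only finitely many integer points, and any such $z$ with $\pi^\top z<\pi_0$ must lie outside $T$ (by validity of $\pi^\top x\ge\pi_0$ on $T^I$), so $Az-b\notin K$ and admits a strict separator $y_z\in K^*$ with $y_z^\top(Az-b)<0$. Adding these inequalities one at a time exhausts the finite list of offending integer points and produces a polyhedron $\tilde{Q}\supseteq T$ with $\tilde{Q}\cap\mathbb{Z}^n\cap\{\pi^\top x<\pi_0\}=\emptyset$, i.e.\ $\pi^\top x\ge\pi_0$ is valid on $\tilde{Q}^I$.

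Each $y$ above was chosen via a strict separation condition that persists under small perturbations within $K^*$; a density argument then allows me to replace every $y$ by a nearby $y'\in K^*$ with $(y')^\top A\in\mathbb{Q}^n$, making $\tilde{Q}$ a rational polyhedron in $\mathbb{R}^n$ (its right-hand sides need not be rational, but this is irrelevant to the integer hull arguments). A final application of Schrijver's Theorem~16.5 to the rational polyhedron $\tilde{Q}$ selects at most $2^n$ of its defining inequalities whose intersection $Q$ still satisfies $\pi^\top x\ge\pi_0$ on $Q^I$, and the corresponding vectors $y^i$ are those required by the theorem. I expect the most delicate step to be the rationalization: one must argue that inside $K^*$ the vectors $y$ with $A^\top y\in\mathbb{Q}^n$ are dense enough to realise every strict separation used in the construction, which is the principal obstacle and likely relies on choosing each separator in the interior of $K^*$ and then perturbing carefully within the affine preimage of $\mathbb{Q}^n$ under $A^\top$.
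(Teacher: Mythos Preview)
Your overall architecture matches the paper's: build a rational polyhedral outer approximation of $T$ on which $\pi^\top x\ge\pi_0$ is already valid for the integer hull, then invoke the Doignon/Bell--Scarf type bound (Schrijver, Corollary~16.5a) to cut down to $2^n$ inequalities. The substantive difference is in how rationality of the rows $(y^i)^\top A$ is achieved.

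You separate first (choosing $y\in K^*$ with a strict inequality) and then hope to perturb $y$ inside $K^*$ so that $A^\top y$ lands in $\mathbb{Q}^n$. The paper does the opposite: it first chooses the rational target direction and then invokes conic strong duality to manufacture the multiplier. Concretely, the paper shows that boundedness of $B$ forces $\pi\in\interior(\recdual{T})$, picks rational vectors $w^i$ near $\pi$ still in that interior (so $\inf\{(w^i)^\top x:x\in T\}$ is finite), and then strong duality (Slater from $\interior T\neq\emptyset$) yields $y^i\in K^*$ with $(y^i)^\top A=(w^i)^\top\in\mathbb{Q}^n$ exactly. The same device, via a rational-separating-hyperplane lemma, handles the finitely many stray integer points. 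This sidesteps your perturbation problem completely: no density argument in $K^*$ is needed.

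Your perturbation route can be made to work, but it hides a nontrivial step you have not justified: you need $\{y:A^\top y\in\mathbb{Q}^n\}$ to be dense in $\mathbb{R}^m$, which fails if $\operatorname{range}(A^\top)$ is an irrational subspace of $\mathbb{R}^n$. Under the theorem's hypotheses this cannot happen---since $\interior(\recdual{T})\neq\emptyset$ and every vector in it is realized as $A^\top y$ for some $y\in K^*$ by strong duality, $A^\top$ is surjective---but that observation already uses exactly the strong-duality argument the paper exploits directly. So your ``principal obstacle'' is real, and the cleanest resolution is precisely the paper's: pick the rational row first, then let duality hand you the $y^i$.
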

We highlight here that particular care was taken in Theorem~\ref{thm:PolyhedralOuterApproxOfBoundedConicCuts}
to ensure that the outer approximating polyhedron has rational constraints.

Since a valid inequality for $Q^I$ can be obtained using a subadditive function $g\in\subfcts{{\mathbb{R}}^p_{+}}$ that satisfies $g(u) = -g(-u)$ for all $u\in\mathbb{R}^p$ \cite{WolseyNemhouser1990} (note that the constraints matrix defining $Q$ is rational), Theorem \ref{thm:PolyhedralOuterApproxOfBoundedConicCuts} implies that if a cut separates a bounded set from $T$, then it can be obtained using exactly one function (\ref{Linearcomposite})
with $p \leq 2^n$. Geometrically, Theorem~\ref{thm:PolyhedralOuterApproxOfBoundedConicCuts}  can be interpreted as the fact that if the set of points separated is bounded, then 	the cut can be obtained using a rational polyhedral outer approximation. 

We obtain the following corollary immediately: 
If the set $\{x\in\mathbb{R}^n \,|\, Ax \succeq_K b\}$ is compact and has non-empty interior, then it is sufficient to restrict attention to linear composition functions  to obtain the convex hull. 
A proof of Theorem~ \ref{thm:PolyhedralOuterApproxOfBoundedConicCuts} is presented in Section \ref{sec:result1}.
 
\subsection{New family of cut-generating functions} 

In the previous section we stated that any valid inequality for the integer hull of a bounded conic set can be obtained using linear composition functions.
So what happens when the underlying set is not bounded? Consider the simple unbounded set
$T'=\{(x_1,x_2)\in\mathbb{R}^2_+\,| \ x_1x_2\geq 1\},$
which is one branch of a hyperbola\footnote{In this paper, we refer to the curve, as well as the convex region delimited by this curve, as the branch of a hyperbola. Same for parabolas and ellipses.}. This set is conic representable, that is $T'=\{x\in\mathbb{R}^2_+\,| \  Ax\succeq_K b\}$, where $K$ is the second-order cone $\mathcal{L}^3$ and 
\begin{eqnarray}\label{eq:AT}
A=\left[\begin{array}{rl}
0 & 0 \\ 
1 & -1 \\ 
1 & 1
\end{array}\right], \ \ \ 
b=\left[\begin{array}{c} -2\\ 0\\ 0 \end{array}\right].
\end{eqnarray}
(We use the notation $\mathcal{L}^m :=\left\{x\in\mathbb{R}^m\,| \ \sqrt{x_1^2+x_2^2+\dots+x_{m-1}^2}\leq x_m \right\}$ to represent the second-order cone in $\mathbb{R}^m$.)
The integer hull of $T'$ is given by the following two inequalities:
\begin{align}
x_1\geq 1, \ x_2\geq 1. \label{ineq:FacetsHyperbola}
\end{align}
It is straightforward to verify that the inequalities (\ref{ineq:FacetsHyperbola}) are \emph{not valid for any polyhedral outer approximation of $T'$}. Indeed any polyhedral outer approximation of $T'$ contains integer points not belonging to $T'$ (see Proposition \ref{prop:NoGoodOuterApproximationForHyperbola}). 
Therefore, applying the cut-generating recipe (\ref{Linearcomposite}) a finite number of times (that is considering integer hulls of a finite number of polyhedral outer approximations of $T'$) does not yield $x_1 \geq 1$. However, we note here that we can use linear composition (\ref{Linearcomposite}) to obtain a cut of the form $x_1 + x_2/k \geq 1$ where $k \in \mathbb{Z}_{+}$ and $k \geq 1$. Clearly 
$$
\bigcap_{k \in \mathbb{Z}_{+}, k \geq 1}\left\{x \in \mathbb{R}^2 \,|\, x_1 + x_2/k \geq 1\right\} = \{x\in \mathbb{R}^2 \,|\, x_1 \geq 1\}.
$$
However, it would be much nicer if we could \emph{directly obtain} $x_1 \geq 1$ without resorting to obtaining it as an implication of an infinite sequence of cuts. 

Many papers \cite{gomory:jo:1972a,gomory:jo:2003,dash:gu:2006,dey:ri:2008,dey:ri:2007b,RichardLM09,RichardD10,Basu2016,KoeppeZ2015a} have explored various families of subadditive functions for linear integer programs. Our second result, in the same spirit, is a parametrized family of functions that belongs to $\subfcts{K}$, where $K$ is the second-order cone $\mathcal{L}^m$. The formal result is as follows:

\begin{theorem}\label{thm:newfunction}
Let $j \in [m-1]$. Define $\Gamma_j:=\{\gamma\in\mathbb{R}^m\, | \ \gamma_m\geq \sum_{i=1}^{m-1}|\gamma_i|, \ \gamma_m>|\gamma_j|\}$. Suppose $\gamma\in\Gamma_j\cup \interior{(\mathcal{L}^m)}$. Consider the real-valued function $f_{\gamma} : \mathbb{R}^m \rightarrow \mathbb{R}$ defined as:
\begin{align}
f_{\gamma}(v)= \begin{cases}
\gamma^{\top}v +1 \ & \text{if} \ v_j\neq 0 \ \text{and} \ \gamma^{\top}v\in\mathbb{Z}, \vspace{0.1cm}\\
\left\lceil\gamma^{\top}v\right\rceil    &\text{otherwise}.  
\end{cases}\label{OurFormula}
\end{align}
Then, $f_{\gamma}\in \subfcts{\mathcal{L}^m}$.
\end{theorem}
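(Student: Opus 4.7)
The plan is to verify the three defining properties of $\subfcts{\mathcal{L}^m}$ in turn. Property (3), $f_{\gamma}(0) = 0$, is immediate since $0_j = 0$ forces the ``otherwise'' branch, giving $\lceil 0 \rceil = 0$.

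For property (2), monotonicity with respect to $\mathcal{L}^m$, the key preliminary step is the following sublemma: for any $w \in \mathcal{L}^m$, one has $\gamma^{\top} w \geq 0$, and moreover $\gamma^{\top} w = 0$ implies $w_j = 0$ (when $\gamma \in \Gamma_j$) or $w = 0$ (when $\gamma \in \interior(\mathcal{L}^m)$). The interior case is standard self-duality. For $\gamma \in \Gamma_j$, the bound $\gamma^{\top} w \geq \gamma_m w_m - \sum_{i<m} |\gamma_i||w_i|$ combined with $|w_i| \leq w_m$ (from $w \in \mathcal{L}^m$) and $\gamma_m \geq \sum_{i<m}|\gamma_i|$ gives nonnegativity. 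To extract strict positivity when $w_j \neq 0$, I would examine the equality case: $\gamma^{\top}w = 0$ forces both $|\gamma_i|(w_m - |w_i|) = 0$ for all $i<m$ and $(\gamma_m - \sum_{i<m}|\gamma_i|)w_m = 0$; since $\sum_{i<m} w_i^2 \leq w_m^2$ permits at most one coordinate $w_i$ with $|w_i| = w_m > 0$, a short case check using $\gamma_m > |\gamma_j|$ then rules out $w_j \neq 0$. Given this sublemma, monotonicity follows by cases on the form of $f_{\gamma}(v)$: if $f_{\gamma}(v) = \lceil \gamma^{\top}v \rceil$, the universal inequality $f_{\gamma}(u) \geq \lceil \gamma^{\top}u \rceil \geq \lceil \gamma^{\top}v \rceil$ suffices; if $f_{\gamma}(v) = \gamma^{\top}v + 1$ (so $v_j \neq 0$ and $\gamma^{\top}v \in \mathbb{Z}$), then either $\gamma^{\top}u > \gamma^{\top}v$, forcing $\lceil \gamma^{\top}u \rceil \geq \gamma^{\top}v + 1$, or $\gamma^{\top}(u-v) = 0$ so by the sublemma $u_j = v_j \neq 0$, whence $f_{\gamma}(u) = \gamma^{\top}u + 1 = \gamma^{\top}v + 1$.

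Subadditivity (property (1)) is handled by case analysis on $f_{\gamma}(u+v)$. If $f_{\gamma}(u+v) = \lceil \gamma^{\top}(u+v) \rceil$, then $f_{\gamma}(u) + f_{\gamma}(v) \geq \lceil \gamma^{\top}u \rceil + \lceil \gamma^{\top}v \rceil \geq \lceil \gamma^{\top}(u+v) \rceil$ does the job. The non-trivial case is $f_{\gamma}(u+v) = \gamma^{\top}(u+v) + 1$, meaning $(u+v)_j \neq 0$ and $\gamma^{\top}(u+v) \in \mathbb{Z}$. Here the parity of $\gamma^{\top}u$ and $\gamma^{\top}v$ must match: either both are integers or both are not. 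In the ``both integers'' subcase, $(u+v)_j \neq 0$ implies $u_j \neq 0$ or $v_j \neq 0$, so at least one of $f_{\gamma}(u), f_{\gamma}(v)$ picks up the $+1$ bump, while the other is at least $\gamma^{\top}(\cdot)$; summing yields $\gamma^{\top}(u+v) + 1$. In the ``both non-integers'' subcase, the fractional parts satisfy $\{\gamma^{\top}u\} + \{\gamma^{\top}v\} = 1$ because their sum is a positive integer and each fractional part lies in $(0,1)$, which gives the identity $\lceil \gamma^{\top}u \rceil + \lceil \gamma^{\top}v \rceil = \gamma^{\top}(u+v) + 1$ precisely.

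The main obstacle I anticipate is the strict positivity portion of the monotonicity sublemma for $\gamma \in \Gamma_j$; the bound $\gamma^{\top}w \geq 0$ is easy but is \emph{not} strict on the entirety of $\mathcal{L}^m \setminus\{0\}$, so the strict inequality $\gamma_m > |\gamma_j|$ must be exploited exactly at the equality locus, and this requires combining the Cauchy--Schwarz-type equality conditions with the geometric observation that at most one coordinate of $w$ with $i<m$ can equal $w_m$ in absolute value. Once this sublemma is in hand, both subadditivity and monotonicity reduce to routine but multi-branch case analyses driven by whether $\gamma^{\top}(\cdot)$ is integral and whether the $j$-th coordinate vanishes.
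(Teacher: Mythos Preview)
Your proposal is correct and follows essentially the same architecture as the paper: verify $f_\gamma(0)=0$ trivially, establish subadditivity and $\mathcal{L}^m$-monotonicity by case analyses, and isolate the key sublemma that $\gamma^\top w\geq 0$ for $w\in\mathcal{L}^m$ with strict inequality when $w_j\neq 0$. The only noteworthy difference is in the proof of that sublemma: the paper reduces to the case where both $\gamma$ and $w$ lie on the boundary of $\mathcal{L}^m$ and then multiplies and sums the coordinate inequalities $w_m>\pm w_i$ directly, whereas you argue the contrapositive by analyzing the equality case of the chain $\gamma^\top w\geq \gamma_m w_m-\sum_{i<m}|\gamma_i||w_i|\geq(\gamma_m-\sum_{i<m}|\gamma_i|)w_m\geq 0$ and then using that at most one $|w_i|$ can equal $w_m$; both routes are short and hinge on $\gamma_m>|\gamma_j|$ in the same way. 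Your case splits for subadditivity and monotonicity are organized from the ``output'' side ($f_\gamma(u+v)$, $f_\gamma(v)$) while the paper splits from the ``input'' side (whether $u$ or $v$ triggers the first clause), but the underlying computations coincide.
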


\begin{figure}[h]
\begin{center}
\includegraphics[scale=0.5]{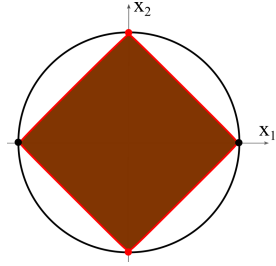} 
\end{center}
\caption{Slice at $x_3=1$ of the second-order cone $\mathcal{L}^3$ and $\Gamma_1$.}\label{fig:Gammaj}
\end{figure}

To see an example of use of $f_{\gamma}$, consider $j=1$ and $\gamma = ( 0, 0.5, 0.5)$. Then applying the resulting function $f_{\gamma}$ to the columns of (\ref{eq:AT}) we obtain the inequality $x_1 \geq 1$. 

Note that the validity of  the first inequality in (\ref{ineq:FacetsHyperbola}) can be explained via the disjunction $x_1 \leq 0 \vee x_1 \geq 1$. Therefore, some of the cuts generated using (\ref{OurFormula}) can be viewed as split disjunctive cuts. Significant research has gone into describing split disjunctive cuts (newer implied conic constraints) for conic sections~\cite{DadushDV11,BelottiGPRT13, ModaresiKV15,ModaresiV2015,KarzanY15,YildizC15,Burer2016}. However, to the best of our knowledge, there is no family of subadditive functions in $\subfcts{\mathcal{L}^m}$ which have been described in closed form previously. 

It is instructive to compare cuts obtained using (\ref{OurFormula}) with two well-known approaches for generating cuts for the integer hull of second-order conic sets~\cite{CezikIyengar2005,AtamturkNarayanan2008}. Note that the CG cuts described in~\cite{CezikIyengar2005} are a special case\footnote{More precisely, in~\cite{CezikIyengar2005} the variables are assumed to be non-negative, in which case we can drop the requirement of $g$ satisfying $g(u)=-g(-u)$ in the definition of linear composition.} of cuts generated via linear composition (\ref{Linearcomposite}). Therefore as discussed above, the CG cuts described in~\cite{CezikIyengar2005} cannot generate (\ref{ineq:FacetsHyperbola}) directly. The conic MIR procedure described in~\cite{AtamturkNarayanan2008} begins with first generating an extended formulation which applied to $T'$ would be of the form:
\begin{eqnarray*}
t_0 &\leq & x_1 + x_2 \\
t_1 &\geq & 2\\
t_2 &\geq &|x_1 - x_2|\\
t_0 &\geq &|\!|t|\!|_2\\
&&x_1, x_2 \in \mathbb{Z}_{+}, t \in \mathbb{R}^3_{+}.
\end{eqnarray*}
Then, cuts for the set $\{(x, t_2)\in \mathbb{Z}^2_{+} \times \mathbb{R}\,|\,  t_2 \geq |x_1 - x_2|\}$ are considered. However, this set is integral in this case and therefore no cuts are obtained. Thus, the conic MIR procedure does not generate the inequalities (\ref{ineq:FacetsHyperbola}).

\begin{remark}\label{remark:OurFunctionIsNotNondec.w.r.t.R3}
The function $f_{\gamma}$ defined in (\ref{OurFormula}) is piecewise linear, and it is therefore tempting to think it may also belong to $\subfcts{\mathcal{R}^m_{+}}$. However it is straightforward to check that $f_{\gamma}$ is \text{not} necessarily non-decreasing with respect to $\mathbb{R}_+^3$. Let $j=1$ and $\gamma=(0,\rho,\rho)$ where $\rho$ is a positive scalar. Then
\begin{align*}
f_{\gamma}(v_1,v_2,v_3)= \begin{cases}
\rho(v_2+v_3) +1 \ & \text{if} \ v_1\neq 0 \ \text{and} \ \rho(v_2+v_3)\in\mathbb{Z}, \vspace{0.1cm}\\
\left\lceil\rho(v_2+v_3)\right\rceil    &\text{otherwise}.  
\end{cases} 
\end{align*}
Consider the vectors  $u=(0,0,1/\rho)$ and $v=(-1,0,1/\rho)$. Then $u\geq_{\mathbb{R}^3_+}v$, whereas
$f_{\gamma}(u)=1<2=f_{\gamma}(v).$ \end{remark}

A proof of Theorem~\ref{thm:newfunction} is presented in Section~\ref{sec:result2}.

\subsection{Cuts for integer conic sets in $\mathbb{R}^2$}\label{sec:cuts}

As mentioned earlier, the family of functions (\ref{OurFormula}) yields the inequalities (\ref{ineq:FacetsHyperbola}). Indeed, we are able to verify a more general result in $\mathbb{R}^2$. 
To explain this result, we will need the following results:
\begin{lemma}\label{lemma:ConicRepresentationHyperbola}
 Let $G$ be one branch of a hyperbola in $\mathbb{R}^2$. Then $G$ can be represented as
$
G=\{x\in\mathbb{R}^2\, | \ Ax\succeq_{\mathcal{L}^{3}}b\},
$
where $A\in\mathbb{R}^{3\times 2}$ is such that $A_{11},A_{12}=0$. Moreover, the asymptotes of $G$ have equations
\begin{align}
(A_{21}+A_{31})x_1+(A_{22}+A_{32})x_2&=b_3+b_2\label{eq:aa61}\\
(-A_{21}+A_{31})x_1+(-A_{22}+A_{32})x_2&=b_3-b_2.\label{eq:aa6}
\end{align}
\end{lemma}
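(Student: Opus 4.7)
The plan is to start from the classical factorization of a hyperbola through its asymptotes. Let the two asymptotes of $G$ be $L_1 = \{x \in \mathbb{R}^2 : p_1^{\top}x = q_1\}$ and $L_2 = \{x \in \mathbb{R}^2 : p_2^{\top}x = q_2\}$, where $p_1, p_2 \in \mathbb{R}^2$ are linearly independent. A standard fact from analytic geometry is that the full hyperbola can be written as $\{x : (p_1^{\top}x - q_1)(p_2^{\top}x - q_2) = c\}$ for some $c \neq 0$: after translating to the center, the defining equation takes the form $Q(u) = k$ for a nondegenerate indefinite quadratic form $Q$, which splits over $\mathbb{R}$ as a product of two linear factors whose zero sets are the asymptotic directions. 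By possibly replacing $(p_2, q_2)$ with $(-p_2, -q_2)$ we may assume $c > 0$, and by possibly swapping the labels of $p_1$ and $p_2$ we may take the branch $G$ to be the convex region $\{x : u \geq 0,\ w \geq 0,\ uw \geq c\}$, where $u := p_1^{\top}x - q_1$ and $w := p_2^{\top}x - q_2$.

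Next, using the identity $4uw = (u+w)^2 - (u-w)^2$, I would convert this branch into a second-order cone inequality. The condition $uw \geq c$ is equivalent to $(u+w)^2 \geq (u-w)^2 + 4c$, and since $c > 0$ this together with the sign condition $u+w \geq 0$ forces both $u$ and $w$ to be non-negative (they must share a sign because $uw > 0$). Hence
\[
G \;=\; \left\{\, x \in \mathbb{R}^2 \,:\, \sqrt{(u-w)^2 + (2\sqrt{c})^2} \,\leq\, u+w \,\right\},
\]
which is exactly an $\mathcal{L}^3$ membership constraint.

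I would then read off $A$ and $b$ by setting $A_1 = (0,0)$, $b_1 = -2\sqrt{c}$, $A_2 = (p_1 - p_2)^{\top}$, $b_2 = q_1 - q_2$, $A_3 = (p_1 + p_2)^{\top}$, and $b_3 = q_1 + q_2$, so that $A_1 x - b_1 = 2\sqrt{c}$, $A_2 x - b_2 = u-w$, $A_3 x - b_3 = u+w$, and $A_{11} = A_{12} = 0$ by construction. The asymptote equations (\ref{eq:aa61}) and (\ref{eq:aa6}) then follow from the identities $A_2 + A_3 = 2p_1^{\top}$, $b_2 + b_3 = 2q_1$ and $A_3 - A_2 = 2p_2^{\top}$, $b_3 - b_2 = 2q_2$, which recover $L_1$ and $L_2$ respectively.

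The main subtlety I expect is the sign bookkeeping: the inequality $(u+w)^2 \geq (u-w)^2 + 4c$ by itself is satisfied on \emph{both} branches of the hyperbola, and it is only the SOC form (through its implicit non-negativity of the right-hand side) that selects the single branch $G$. Once this is nailed down, the rest of the argument reduces to identifying coefficients from the factored representation, which is routine.
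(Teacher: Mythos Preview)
Your argument is correct and takes a somewhat different, more direct route than the paper. The paper starts from the general quadratic $\tfrac{1}{2}x^{\top}Qx+d^{\top}x+s=0$, diagonalizes $Q$ via its eigenvectors to reach the canonical form $[\beta_1(y_1-\alpha_1)]^2-[\beta_2(y_2-\alpha_2)]^2=\eta^2$ in rotated coordinates, writes the $\mathcal{L}^3$ constraint there, and then undoes the change of variables; the asymptote equations are read off at the end from the canonical form. You instead begin with the asymptote factorization $(p_1^{\top}x-q_1)(p_2^{\top}x-q_2)=c$ and apply the identity $4uw=(u+w)^2-(u-w)^2$, which is essentially the same $45^{\circ}$ rotation the paper performs, but done in one line. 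Because the lemma is ultimately a statement about the asymptotes, your route makes the formulas for them fall out immediately from $A_2\pm A_3$ and $b_2\pm b_3$, whereas the paper has to track them through two coordinate changes. Both arrive at the same representation.

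One small slip in your normalization step: swapping the labels of $p_1$ and $p_2$ does \emph{not} switch branches---it only interchanges $u$ and $w$, leaving the two sign regions $\{u>0,\,w>0\}$ and $\{u<0,\,w<0\}$ intact. To pick the other branch after you have arranged $c>0$, replace \emph{both} $(p_i,q_i)$ by $(-p_i,-q_i)$; this sends $(u,w)\mapsto(-u,-w)$ while keeping $uw=c$ unchanged. With that correction your branch selection is clean, and the rest of the argument goes through as written.
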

In order to generate cuts for $G$ in Lemma~\ref{lemma:ConicRepresentationHyperbola} using functions (\ref{OurFormula}) we first require the variables to be non-negative. Therefore, let us write $G$ as 
\begin{align}
A^1x^+_1-A^1x^-_1+A^2x^+_1-A^2x^-_1&\succeq_{\mathcal{L}^{3}} b\label{eq:aa1}\\
x_1^+,x_1^-,x_2^+,x_2^-&\geq 0 \label{eq:aa2}\\
x_j=x_j^+-x_j^- \ j&\in\{1,2\}.\label{eq:aa3}
\end{align}
Assuming that the asymptotes of $G$ are rational, we may assume that the coefficients in (\ref{eq:aa61}) and (\ref{eq:aa6}) are integers and then let  
$\tau=\gcd(A_{21}+A_{31}, A_{22}+A_{32}).$
Let $j=1$ and $\gamma=\left(0,1/\tau,1/\tau\right)$. Then we apply the function $f_{\gamma}$ to obtain the following cut for (\ref{eq:aa1}), (\ref{eq:aa2}):
\begin{align}
\frac{(A_{21}+A_{31})}{\tau}x^+_1-\frac{(A_{21}+A_{31})}{\tau}x^-_1+\frac{(A_{22}+A_{32})}{\tau}x^+_2-\frac{(A_{22}+A_{32})}{\tau}x^-_2\geq f_{\gamma}(b).\label{eq:aa4}
\end{align}
Now, using (\ref{eq:aa3}) and observing that the coefficient of $x_j^+$ is the negative of the coefficient of $x^-_j$ in (\ref{eq:aa4}), $j=1,2$, we can project the inequality (\ref{eq:aa4}) to the space of the original $x$ variables. The resulting cut is parallel to the asymptote (\ref{eq:aa61}). We can do a similar calculation to obtain a cut parallel to the other asymptote (\ref{eq:aa6}). We state all this concisely in the next proposition.
\begin{proposition}\label{prop:1}
Let
$G=\{x\in\mathbb{R}^2\, | \ Ax\succeq_{\mathcal{L}_+^{3}}b\}$
be one branch of a hyperbola with rational asymptotes, where $A\in\mathbb{R}^{3\times 2}$ and $A_{11},A_{12}=0$. Then the following inequalities are valid for $G^I$:
\begin{align}
(u^j)^{\top}A^1x_1+(u^j)^{\top}A^2x_2\geq \tau^jf_{\gamma^j}(b), \label{CutsForHyperbola}
\end{align}
where $u^1=(0,1,1), \ u^2=(0,-1,1)$, $\tau^j=\gcd((u^j)^{\top}A^1,(u^j)^{\top}A^2)$ and $\gamma^j:=u^j/\tau^j$, $j=1,2$.
\end{proposition}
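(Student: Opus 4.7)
The plan is to formalize the derivation sketched immediately before the proposition. For each $j \in \{1,2\}$ I introduce non-negative variables via $x_k = x_k^+ - x_k^-$, $x_k^\pm \ge 0$, obtaining the lifted system (\ref{eq:aa1})--(\ref{eq:aa3}); since every integer point of $G$ lifts to an integer point of this system (taking $x_k^+ = \max(x_k, 0)$, $x_k^- = \max(-x_k, 0)$), any inequality valid for the integer hull of the lifted system that depends only on the differences $x_k^+ - x_k^-$ automatically yields a valid inequality for $G^I$.

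To apply Theorem~\ref{thm:newfunction} with the theorem's index equal to $1$, I first check that $\gamma^j \in \Gamma_1 \cup \interior(\mathcal{L}^3)$. For $\gamma^1 = (0, 1/\tau^1, 1/\tau^1)$ and $\gamma^2 = (0, -1/\tau^2, 1/\tau^2)$, both satisfy $\gamma^j_3 = |\gamma^j_1| + |\gamma^j_2|$ and $\gamma^j_3 > 0 = |\gamma^j_1|$, so $\gamma^j \in \Gamma_1$ and $f_{\gamma^j} \in \subfcts{\mathcal{L}^3}$ by Theorem~\ref{thm:newfunction}.

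Next I evaluate $f_{\gamma^j}$ on each of the four columns $\pm A^1, \pm A^2$ of the lifted constraint matrix. Because $A_{11} = A_{12} = 0$, the first coordinate of each such column is zero, so the branch ``$v_1 \ne 0$'' in the piecewise definition (\ref{OurFormula}) is never triggered and $f_{\gamma^j}(\pm A^k) = \lceil \pm (\gamma^j)^{\top} A^k \rceil$. By the definition of $\tau^j$ as the gcd of $(u^j)^{\top}A^1$ and $(u^j)^{\top}A^2$ (well-defined and positive because rationality of the asymptotes lets us clear denominators to make these integers), the scalar $(\gamma^j)^{\top}A^k = (u^j)^{\top}A^k / \tau^j$ is an integer; hence $f_{\gamma^j}(\pm A^k) = \pm (\gamma^j)^{\top}A^k$. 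Plugging these coefficients into the cut-generating inequality (\ref{eq:cut}) gives
$$(\gamma^j)^{\top}A^1 (x_1^+ - x_1^-) + (\gamma^j)^{\top}A^2 (x_2^+ - x_2^-) \ge f_{\gamma^j}(b),$$
and substituting $x_k = x_k^+ - x_k^-$ and multiplying through by $\tau^j$ produces (\ref{CutsForHyperbola}), which is then valid for $G^I$ by the first paragraph.

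The whole argument is essentially definition-chasing; the single point to watch is that $A_{11} = A_{12} = 0$ (guaranteed by Lemma~\ref{lemma:ConicRepresentationHyperbola}) is exactly what forces the ``otherwise'' branch of (\ref{OurFormula}) on all four column evaluations, which in turn makes the coefficients of $x_k^+$ and $x_k^-$ exact negatives of each other. Without this structural fact, the projection back to the original $x$-variables would leave residual lifted terms and the cut would fail to be an inequality in $x$ alone.
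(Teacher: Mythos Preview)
Your proposal is correct and follows essentially the same approach as the paper: the paper's ``proof'' of this proposition is precisely the derivation in the paragraph preceding it (lift to non-negative variables, apply $f_{\gamma^j}$ with the theorem's index equal to $1$, observe that $A_{11}=A_{12}=0$ forces the ``otherwise'' branch so the column evaluations are exact integers, then project back), and you have faithfully formalized each of these steps. Your added checks --- that $\gamma^j\in\Gamma_1$, that $\tau^j$ is well defined and positive, and that an integer point of $G$ lifts to an integer point of the extended system --- fill in details the paper leaves implicit but do not change the argument.
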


We are now ready to state the main result of this section.

\begin{theorem}\label{thm:2d}
Let 
$
W=\displaystyle\bigcap_{i\in[m]}W^i,
$
where 
$
W^i=\{x\in\mathbb{R}^2\, | \ A^ix\succeq_{\mathcal{L}^{m_i}}b^i\},
$
$A^i\in\mathbb{R}^{m_i\times 2}$, $b^i\in\mathbb{R}^{m_i}$ and $\mathcal{L}^{m_i}$ is the second-order cone in $\mathbb{R}^{m_i}$.
Assume $W$ has nonempty interior and each constraint $A^ix\succeq_{\mathcal{L}^{m_i}}b^i$ in the description of $W$ is either a half-space or a single conic section, such as a parabola, an ellipse, or one branch of a hyperbola. Also assume that if $W^i$ is a hyperbola, then it is non-degenerate and it is written as in Lemma~\ref{lemma:ConicRepresentationHyperbola}, that is $A^i\in\mathbb{R}^{3\times 2}$ and $A^i_{11},A^i_{12}=0$. Finally, we assume that each $W^i$ is non-redundant, that is, for all $j\in[m]$, $W$ is strictly contained in $\displaystyle\bigcap_{i\in[m],i\neq j}W^i$. Then the following statements hold:
\begin{enumerate}
\item If $W\cap\mathbb{Z}^2=\emptyset$, then this fact can be certified with the application of at most two inequalities generated from (\ref{Linearcomposite}) or (\ref{CutsForHyperbola});
\item Assume $\textup{interior}(W)\cap\mathbb{Z}^2\neq \emptyset$. If $\pi^{\top}x\geq \pi_0$ defines a face of $W^I$ where $\pi\in\mathbb{Z}^2$ is non-zero, then this inequality can be obtained with application of exactly one function (\ref{Linearcomposite}) or it is one of the inequalities (\ref{CutsForHyperbola}).
\end{enumerate}
\end{theorem}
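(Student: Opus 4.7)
The plan is to establish statement 2 first and then use it to derive statement 1. Fix a facet-defining inequality $\pi^{\top}x \geq \pi_0$ of $W^I$ with $\pi \in \mathbb{Z}^2 \setminus \{0\}$, and set $B := \{x \in W : \pi^{\top}x \leq \pi_0\}$. I would split according to whether $B$ is bounded.

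If $B$ is bounded and nonempty, the hypotheses of Theorem~\ref{thm:PolyhedralOuterApproxOfBoundedConicCuts} are satisfied, so there exists a rational polyhedral outer approximation $Q$ of $W$ for which $\pi^{\top}x \geq \pi_0$ is valid for $Q^I$. Classical subadditive duality for integer linear programs with rational constraint matrices then yields one symmetric subadditive function $g \in \subfcts{\mathbb{R}^p_+}$ that delivers this inequality on $Q^I$, and composing $g$ with the linear forms $v \mapsto (y^i)^{\top}v$ produces a single linear composition function of type (\ref{Linearcomposite}).

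If $B$ is unbounded, let $d$ be a recession direction of $B$. Then $d \in \rec{W}$ and $\pi^{\top}d \leq 0$. Because $W$ has nonempty interior, every rational direction in $\rec{W}$ is a recession direction of $W^I$, so validity of $\pi^{\top}x \geq \pi_0$ forces $\pi^{\top}d \geq 0$ for such $d$; consequently $\pi^{\top}d = 0$, and the facet is parallel to $d$. I would then classify which non-redundant $W^i$ supplies this recession in $\mathbb{R}^2$: ellipses contribute no recession direction; a half-space whose boundary is parallel to the facet is already the cut (a trivial linear composition); a parabola contributes its axis, from which a Chv\'atal-type linear composition on the parabola's conic representation recovers the facet; and a hyperbola branch with asymptote parallel to $d$ yields (\ref{CutsForHyperbola}) via Proposition~\ref{prop:1}. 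The main technical obstacle is to verify that the coefficients and right-hand side of (\ref{CutsForHyperbola}) coincide with $\pi$ and $\pi_0$ after $\gcd$ reduction, equivalently that $\pi_0 = \tau^j f_{\gamma^j}(b)$; this uses that the supporting face $\pi^{\top}x = \pi_0$ is the closest integer translate of the hyperbola's asymptote attained by $W^I$.

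For statement 1, I would first observe that a two-dimensional closed convex set with nonempty interior disjoint from $\mathbb{Z}^2$ must be bounded: if $W$ admitted a recession direction $d$, then from any interior point $x_0$ the set $W$ would contain an entire $\varepsilon$-tube around the ray $\{x_0 + t d : t \geq 0\}$, and this tube contains lattice points arbitrarily far along the ray---trivially when $d$ is rational, and by Weyl equidistribution in the perpendicular direction when $d$ is irrational. With $W$ bounded and lattice-free, the two-dimensional classification of lattice-free convex sets provides an integer primitive direction $a$ and integer $k$ such that either $\max_{x \in W} a^{\top}x < k$ separates $W$ in a single cut, or two cuts in opposite directions $\pm a$ suffice jointly. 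Each such cut is a linear composition cut obtained by applying Theorem~\ref{thm:PolyhedralOuterApproxOfBoundedConicCuts} with $B = W$ (which is bounded), or, when a hyperbola branch in the description supplies the needed asymptote, via Proposition~\ref{prop:1}; this yields the claimed certificate using at most two inequalities.
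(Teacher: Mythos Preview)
Your outline has the right overall shape for statement~2 in the bounded-$B$ case, and it correctly anticipates that the asymptote-parallel facets of a hyperbola branch are delivered by~(\ref{CutsForHyperbola}). However, there are two genuine gaps.

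\medskip
\textbf{Statement 1: the boundedness claim is false.} You assert that a closed convex set in $\mathbb{R}^2$ with nonempty interior and empty lattice intersection must be bounded, arguing that an $\varepsilon$-tube around a ray in a rational direction ``trivially'' contains lattice points. It does not: take $W=\{x:0.1\le x_1\le 0.9\}$, which is closed, convex, full-dimensional, unbounded, and lattice-free; the tube of width $0.3$ around the ray from $(0.5,0)$ in direction $(0,1)$ contains no integer point. The paper does not claim boundedness. Instead, when $W$ is unbounded and lattice-free it invokes the two-dimensional lattice-free structure theorem to place $W$ inside a split strip $\{x:\pi_0\le\pi^\top x\le\pi_0+1\}$ with $\pi$ primitive integral, and then produces each of the two bounding cuts separately. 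For each cut it uses a reduction lemma (conic duality plus the structure of the recession cone) to show that the infimum $\inf\{\pi^\top x:x\in W\}$ is already attained over a single non-redundant $W^{i_0}$, which can only be a half-space with normal $\pi$ or a hyperbola branch with an asymptote orthogonal to $\pi$. Your proposal skips this reduction entirely.

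\medskip
\textbf{Statement 2, unbounded $B$: the parabola case and the missing reduction.} You write that when the recession direction $d$ comes from a parabola, ``a Chv\'atal-type linear composition on the parabola's conic representation recovers the facet.'' This cannot work: if $\pi$ is orthogonal to the axis of a parabola $W^i$, then $\inf\{\pi^\top x:x\in W^i\}=-\infty$ (this is exactly Lemma~\ref{lemma:ParabolaUnboundedProblem}), so no valid inequality with normal $\pi$ is obtainable from that parabola alone. The paper's argument is not to look for \emph{some} $W^i$ whose recession cone contains $d$ (every $W^i$ does, since $d\in\rec{W}\subseteq\rec{W^i}$), but to prove via conic strong duality that there exists a single $W^{i_0}$ with $\inf\{\pi^\top x:x\in W^{i_0}\}=\inf\{\pi^\top x:x\in W\}$, and then to rule out the ellipse and parabola cases for this particular $W^{i_0}$. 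That reduction (Lemma~\ref{lemma:OnlyOneConicIsEnough}) is the technical heart of the unbounded case, and your sketch does not supply a substitute for it. The subsequent verification that the right-hand side matches $\pi_0$ (your ``main technical obstacle'') is handled in the paper by Lemma~\ref{lemma:CustForGeneralHyperbola} together with the observation that the line $\pi^\top x=\pi_0$ contains a full ray of $W$ and hence an integer point of $W$.
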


Proof of Lemmma~\ref{lemma:ConicRepresentationHyperbola} and Theorem~\ref{thm:2d} are presented in Section~\ref{sec:result3}.

\section{Cutting-planes separating bounded set of points}\label{sec:result1}
In this section, we prove Theorem~\ref{thm:PolyhedralOuterApproxOfBoundedConicCuts}. We begin by stating three well-known lemmas.
\begin{lemma}\label{lemma:InteriorOfDualCone}
Let $K\in\mathbb{R}^n$ be a closed cone and let $K^*$ denote its dual. Then
$\interior{(K^*)}= \{y\in\mathbb{R}^n\, | \ y^{\top}x>0 \ \forall x\in K\setminus\{0\}\}.$
\end{lemma}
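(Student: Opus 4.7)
The plan is to prove the two set inclusions separately, exploiting only the closedness of $K$, homogeneity of the inner product, and compactness of the unit sphere.

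For the inclusion $\{y : y^{\top}x > 0 \ \forall x \in K\setminus\{0\}\} \subseteq \interior{(K^*)}$, fix such a $y$. Assuming $K\neq\{0\}$ (the degenerate case gives $K^*=\mathbb{R}^n$ and both sides are trivially equal), the set $S_K := K\cap\{x : \|x\|=1\}$ is nonempty, closed, and bounded, hence compact. The continuous map $x\mapsto y^{\top}x$ is strictly positive on $S_K$ by hypothesis, so it attains a minimum $\epsilon>0$ there. By homogeneity, $y^{\top}x \geq \epsilon\|x\|$ for every $x\in K$. For any perturbation $y'$ with $\|y'-y\|<\epsilon$ and any $x\in K$, Cauchy–Schwarz yields $(y')^{\top}x \geq \epsilon\|x\| - \|y'-y\|\,\|x\| \geq 0$, so $y'\in K^*$. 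This exhibits an open ball around $y$ inside $K^*$, proving $y\in\interior{(K^*)}$.

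For the reverse inclusion $\interior{(K^*)}\subseteq\{y : y^{\top}x > 0 \ \forall x \in K\setminus\{0\}\}$, take $y\in\interior{(K^*)}$ with some $\epsilon>0$ such that the open ball $B(y,\epsilon)$ lies in $K^*$, and let $x\in K\setminus\{0\}$. Consider the perturbation $y' := y - \tfrac{\epsilon}{2}\cdot\tfrac{x}{\|x\|}$; since $\|y'-y\|=\epsilon/2<\epsilon$, we have $y'\in K^*$, so $(y')^{\top}x\geq 0$. Expanding gives $y^{\top}x \geq \tfrac{\epsilon}{2}\|x\| > 0$, which is exactly the desired strict inequality.

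I do not expect a main obstacle here: the result is classical convex analysis and each direction is a one-line compactness or perturbation argument. The only care required is handling the edge case $K=\{0\}$ separately (where the quantifier over $K\setminus\{0\}$ is vacuous and both sides equal $\mathbb{R}^n$), and ensuring that the perturbation radius $\epsilon$ obtained from compactness in the first direction is uniformly valid across all of $K$, which is precisely what homogeneity delivers.
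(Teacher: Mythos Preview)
Your proof is correct and follows the standard route for this classical result. The paper itself does not prove this lemma: it is introduced as one of three ``well-known lemmas'' stated without proof at the beginning of the section on cutting-planes separating bounded sets, so there is no paper proof to compare against. Your argument---compactness of $K$ intersected with the unit sphere for one inclusion, and explicit perturbation along $x/\|x\|$ for the other---is the textbook approach, and your handling of the degenerate case $K=\{0\}$ is appropriate.
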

Hereafter, we will denote the recession cone of a set $C$ by $\rec{C}$ and the dual of $\rec{C}$ by $\recdual{C}$.
\begin{lemma}\label{lemma:ConditionBoundedConicProgram}
Let $C\subseteq \mathbb{R}^n$ be a nonempty closed convex set. 
Then the following statements hold:
\begin{itemize}
\item[(i)] for every $c\in \interior{(\recdual{C})}$ the problem $\inf\{c^{\top}x \,|\, x\in C\}$ is bounded. 
\item[(ii)] for every $c
\notin \recdual{C}$ the problem $\inf\{c^{\top}x \,|\, x\in C\}$ is unbounded.
\end{itemize}
\end{lemma}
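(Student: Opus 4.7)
The plan is to handle the two parts separately by directly exploiting the defining properties of recession cones and dual cones, together with the characterization of $\interior(\recdual{C})$ supplied by Lemma~\ref{lemma:InteriorOfDualCone}.

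For part (ii), I would extract a direction of unboundedness straight from the non-membership $c \notin \recdual{C}$. By definition of the dual cone, there exists $d \in \rec{C}$ with $c^{\top}d < 0$. Fixing any $x_0 \in C$, the ray $\{x_0 + \lambda d : \lambda \geq 0\}$ lies in $C$ by the very definition of the recession cone, and along this ray $c^{\top}(x_0 + \lambda d) = c^{\top}x_0 + \lambda\, c^{\top}d \to -\infty$ as $\lambda \to \infty$. So the infimum is $-\infty$.

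For part (i), I would argue by contradiction. Suppose $\inf\{c^{\top}x : x \in C\} = -\infty$, so there is a sequence $\{x^k\} \subseteq C$ with $c^{\top}x^k \to -\infty$; this forces $\|x^k\| \to \infty$. Passing to a subsequence (by compactness of the unit sphere), assume $x^k/\|x^k\| \to d$ with $\|d\| = 1$. The standard fact that $d \in \rec{C}$ follows from closedness plus convexity: for any fixed $x_0 \in C$ and any $\lambda \geq 0$, the convex combinations $\bigl(1 - \lambda/\|x^k\|\bigr)\, x_0 + \bigl(\lambda/\|x^k\|\bigr)\, x^k$ lie in $C$ for large $k$ and converge to $x_0 + \lambda d$, which therefore belongs to $C$ by closedness. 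Since $c \in \interior(\recdual{C})$ and $\rec{C}$ is a closed cone, Lemma~\ref{lemma:InteriorOfDualCone} gives $c^{\top}d > 0$. On the other hand, dividing $c^{\top}x^k$ by $\|x^k\| > 0$ and using $c^{\top}x^k < 0$ for large $k$ yields $\limsup_k c^{\top}x^k / \|x^k\| \leq 0$, contradicting $c^{\top}x^k / \|x^k\| \to c^{\top}d > 0$. Thus the infimum must be finite.

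The only nontrivial step is the classical passage from a normalized limit of an unbounded sequence in $C$ to a recession direction; both closedness and convexity of $C$ are essential there, and after that everything else is a direct consequence of the definitions.
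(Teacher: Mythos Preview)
Your proof is correct. The paper, however, does not supply its own proof of this lemma: it is listed among three ``well-known lemmas'' that are merely stated, so there is nothing to compare your argument against. Your approach---extracting a recession direction from a normalized unbounded sequence for part~(i), and reading off a direction of descent from the definition of the dual cone for part~(ii)---is the standard one, and your invocation of Lemma~\ref{lemma:InteriorOfDualCone} to get the strict inequality $c^{\top}d>0$ is exactly how the paper would expect that lemma to be used.
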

\begin{lemma}[Conic strong duality \cite{NemirovskiNotes}]\label{lem:strdual}
Let $K\subseteq \mathbb{R}^m$ be a regular cone. Consider the conic set
$
T=\{x\in\mathbb{R}^n\, | \ Ax\succeq_K b\},
$
where $A\in\mathbb{R}^{m\times n}$ and $b\in\mathbb{R}^m$.
Assume $\interior{T}\neq \emptyset$. If $c\in \mathbb{R}^n$ is such that $\inf\{c^{\top}x \,|\, x\in T\}$ is bounded, then there exists $y \in K^{*}$ such that $y^\top A = c^\top$ and $y^\top b = \inf\{c^{\top}x \,|\, x\in T\}$.
\end{lemma}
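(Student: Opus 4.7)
The plan is the textbook convex-analysis proof of conic strong duality: separate an affine set from an open convex set in $\mathbb{R}^{m+1}$, and then use the Slater hypothesis $\interior T\neq\emptyset$ to eliminate the degenerate case in the resulting separator. Let $v^*:=\inf\{c^\top x : x\in T\}$, which is finite by assumption; the goal is to exhibit $y\in K^*$ with $A^\top y=c$ and $b^\top y=v^*$.

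First I would introduce the two convex subsets of $\mathbb{R}^{m+1}$ given by the affine set $C_1:=\{(Ax-b,\,c^\top x):x\in\mathbb{R}^n\}$ and the open convex set $C_2:=\interior(K)\times(-\infty,v^*)$, the latter being nonempty because $K$ is regular. These sets are disjoint, since any common point would provide an $x\in T$ with $c^\top x<v^*$, contradicting the definition of $v^*$. Applying the separating hyperplane theorem then yields a nonzero pair $(y,\mu)\in\mathbb{R}^m\times\mathbb{R}$ and a scalar $\beta$ with $y^\top(Ax-b)+\mu\,c^\top x\ \geq\ \beta\ \geq\ y^\top k+\mu t$ for every $x\in\mathbb{R}^n$, $k\in\interior(K)$ and $t<v^*$.

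Next I would extract structural consequences. Since the left-hand side is affine in $x$ and bounded below by $\beta$, its linear part must vanish, giving $A^\top y=-\mu c$ and collapsing the left-hand side to the constant $-b^\top y\geq\beta$. From the right-hand side, letting $t\to-\infty$ forces $\mu\geq 0$, and scaling $k\in\interior(K)$ by $\lambda\to\infty$ forces $y^\top k\leq 0$ on $\interior(K)$, hence on $K$ by closure. Setting $\bar y:=-y$ one therefore has $\bar y\in K^*$, $A^\top\bar y=\mu c$, and $b^\top\bar y\geq\mu v^*$ (the latter by taking $k\to 0$ and $t\to v^*$ in the separation). In the case $\mu>0$, rescaling by $1/\mu$ produces $\tilde y:=\bar y/\mu\in K^*$ with $A^\top\tilde y=c$ and $b^\top\tilde y\geq v^*$; weak duality, which is the one-line identity $c^\top x=\tilde y^\top(Ax-b)+b^\top\tilde y\geq b^\top\tilde y$ valid for every $x\in T$, forces the reverse inequality, giving $b^\top\tilde y=v^*$ as required.

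The main obstacle is ruling out the degenerate case $\mu=0$, and this is precisely where the Slater condition $\interior T\neq\emptyset$ enters. If $\mu=0$, then $\bar y\neq 0$ (since $(y,\mu)\neq 0$) and $A^\top\bar y=0$, so picking any $x_0\in\interior T$ one has $Ax_0-b\in\interior(K)$, and Lemma~\ref{lemma:InteriorOfDualCone} yields $\bar y^\top(Ax_0-b)>0$, which simplifies to $-b^\top\bar y>0$ because $A^\top\bar y=0$. But $b^\top\bar y\geq\mu v^*=0$, a contradiction. Hence $\mu>0$, and the rescaling argument of the previous paragraph produces the desired dual optimal $\tilde y$.
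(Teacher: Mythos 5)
The paper offers no proof of this lemma at all --- it is imported verbatim from the conic duality theorem of \cite{NemirovskiNotes} --- so the only meaningful comparison is against the standard argument underlying that citation, which is exactly what you reproduce. Most of your write-up is correct: the disjointness of $C_1$ and $C_2$, the vanishing of the linear part of the separator, the sign conditions giving $\mu\geq 0$ and $\bar y\in K^*$, the inequality $b^\top\bar y\geq\mu v^*$, and the weak-duality step that closes the case $\mu>0$ are all sound.

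There is, however, a genuine gap in your elimination of the case $\mu=0$: you assert that $x_0\in\interior(T)$ implies $Ax_0-b\in\interior(K)$, and this implication is false in general. The interior of $T$ is taken in $\mathbb{R}^n$ while $\interior(K)$ lives in $\mathbb{R}^m$; when $A$ does not have full row rank, the affine image $\{Ax-b \,|\, x\in\mathbb{R}^n\}$ can be contained entirely in the boundary of $K$ even though $T$ is full-dimensional. For instance, with $K=\mathbb{R}^3_+$ and $Ax-b=(x_1,x_2,0)$ one gets $T=\mathbb{R}^2_+$, which has nonempty interior, yet $Ax-b$ never meets $\interior(K)$. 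The identity $\interior(T)=\{x \,|\, Ax-b\in\interior(K)\}$ does hold when $A$ is surjective (the map $x\mapsto Ax-b$ is then open), but the lemma does not assume this --- indeed the paper applies it to $3\times 2$ matrices $A$. Under the literally stated hypothesis $\interior(T)\neq\emptyset$, your argument in the case $\mu=0$ only yields $\bar y^\top(Ax_0-b)\geq 0$, i.e.\ $b^\top\bar y\leq 0$, which together with $b^\top\bar y\geq\mu v^*=0$ gives $b^\top\bar y=0$ and no contradiction. What you have actually proved is the theorem under the standard \emph{strict feasibility} (Slater) hypothesis, namely the existence of $x_0$ with $Ax_0-b\in\interior(K)$, which is how the result is stated in \cite{NemirovskiNotes} and which holds in all of the paper's applications of the lemma. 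To prove the statement exactly as written you would need either to add that hypothesis, or to rerun the separation inside the affine hull of $\{Ax-b \,|\, x\in\mathbb{R}^n\}$ using relative interiors and then extend the resulting functional from the minimal face of $K$ to an element of $K^*$ --- an extra step your proposal does not supply.
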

The next lemma states that under some conditions it is possible to separate a point from a set using a rational separating hyperplane. 
\begin{lemma}\label{lemma:RationalSeparHyperplane}
Let $C\subseteq \mathbb{R}^n$ be a closed convex set. Assume $\interior{(\recdual{C})}\neq \emptyset$. Let $z\notin C$. Then there exist $\pi\in\mathbb{Q}^n$, $\pi\neq 0$, and $\pi_0\in\mathbb{R}$ such that $\pi^{\top}z <\pi_0\leq \pi^{\top}x$ for all $x\in C$.
\end{lemma}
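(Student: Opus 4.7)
The plan is to first apply the classical strong separation theorem to obtain a real separating hyperplane, then use the nonempty interior of $\recdual{C}$ to shift the normal vector safely into that interior, and finally rationalize by density. The case $C=\emptyset$ is vacuous (pick any nonzero $\pi\in\mathbb{Q}^n$ and $\pi_0=\pi^{\top}z+1$), so I assume $C\neq\emptyset$ throughout.

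First, since $C$ is closed and convex and $z\notin C$, strong separation yields some $\pi\in\mathbb{R}^n\setminus\{0\}$ and $\bar\pi_0\in\mathbb{R}$ with $\pi^{\top}z<\bar\pi_0\leq\pi^{\top}x$ for all $x\in C$. Setting $\alpha:=\inf_{x\in C}\pi^{\top}x$ we get $\alpha\geq\bar\pi_0>\pi^{\top}z$, so $\alpha>-\infty$, and the contrapositive of Lemma~\ref{lemma:ConditionBoundedConicProgram}(ii) gives $\pi\in\recdual{C}$. However $\pi$ need not be rational and may lie on $\partial\recdual{C}$, where $v(\eta):=\inf_{x\in C}\eta^{\top}x$ can jump to $-\infty$ under arbitrarily small perturbations, so we cannot rationalize $\pi$ directly.

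To fix this, pick a rational $\pi^{*}\in\interior{(\recdual{C})}$, which exists by the hypothesis and density of $\mathbb{Q}^n$ in the open set $\interior{(\recdual{C})}$. Because $\recdual{C}$ is a convex cone and $\pi^{*}$ lies in its interior, $\pi+\epsilon\pi^{*}\in\interior{(\recdual{C})}$ for every $\epsilon>0$. The elementary bound $\inf(a+b)\geq\inf a+\inf b$ on $C$ gives $\inf_{x\in C}(\pi+\epsilon\pi^{*})^{\top}x\geq\alpha+\epsilon v^{*}$, where $v^{*}:=\inf_{x\in C}\pi^{*\top}x$ is finite by Lemma~\ref{lemma:ConditionBoundedConicProgram}(i). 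Since $(\pi+\epsilon\pi^{*})^{\top}z=\pi^{\top}z+\epsilon\pi^{*\top}z$ and $\alpha-\pi^{\top}z>0$, choosing $\epsilon>0$ sufficiently small makes $\alpha-\pi^{\top}z$ dominate $\epsilon(\pi^{*\top}z-v^{*})$, so the vector $\pi_1:=\pi+\epsilon\pi^{*}$ still strictly separates $z$ from $C$ and now lies in $\interior{(\recdual{C})}$.

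For the rationalization, the value function $v(\eta)$ is concave on $\mathbb{R}^n$ (as an infimum of linear functionals of $\eta$) and finite throughout $\interior{(\recdual{C})}$ by Lemma~\ref{lemma:ConditionBoundedConicProgram}(i); a finite concave function on an open convex set is automatically continuous, so $v$ is continuous on $\interior{(\recdual{C})}$. Therefore the set $A:=\{\eta\in\interior{(\recdual{C})}\,|\,v(\eta)>\eta^{\top}z\}$ is open in $\mathbb{R}^n$, and by the previous step it is nonempty (it contains $\pi_1$). Density of $\mathbb{Q}^n$ produces a rational $\tilde\pi\in A$, necessarily nonzero since $v(0)=0=0^{\top}z$ rules out $\tilde\pi=0$. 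Setting $\pi_0:=v(\tilde\pi)\in\mathbb{R}$ then yields $\tilde\pi^{\top}z<\pi_0\leq\tilde\pi^{\top}x$ for all $x\in C$, as required. The main obstacle, as noted above, is the discontinuity of $v$ on $\partial\recdual{C}$; the role of the interior-shift step is precisely to move into a region where $v$ is well-behaved and nearby rational points inherit separation.
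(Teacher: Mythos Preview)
Your proof is correct. It shares with the paper the overall plan---real separation, then perturb the normal using $\interior{(\recdual{C})}$ to achieve rationality---but the rationalization step is carried out differently. The paper selects $n+1$ affinely independent vectors $w^1,\ldots,w^{n+1}\in\interior{(\recdual{C})}$ and sets $\pi=w+\sum_i\varepsilon_i w^i$, choosing the $\varepsilon_i>0$ small enough to preserve strict separation and simultaneously so that $\pi\in\mathbb{Q}^n$; this is possible because the positive cone spanned by $n+1$ affinely independent vectors is full-dimensional, so $w+\{\sum_i\varepsilon_i w^i:\varepsilon_i>0\ \text{small}\}$ is an open neighbourhood of $w$. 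You instead push the normal into the interior with a single interior vector, then invoke continuity of the value function $v(\eta)=\inf_{x\in C}\eta^{\top}x$ on $\interior{(\recdual{C})}$ (finite concave on an open convex set, hence continuous) to exhibit an \emph{open} set of separating directions, and finally pick a rational point by density. Your route is arguably cleaner and makes transparent why rationalization succeeds (the set of good directions is open), at the price of citing the continuity of finite concave functions; the paper's route is more elementary in its ingredients but requires the slightly fiddlier simultaneous choice of $n+1$ positive scalars.
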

\begin{proof}
The standard separation theorem ensures that there exist $w\in\mathbb{R}^n$, $w\neq 0$, and $w_0\in\mathbb{R}$ such that  $w^{\top}z <w_0\leq w^{\top}x$ for all $x\in C$. As $\interior{(\recdual{C})}\neq \emptyset$ there exist $w^1,w^2,\ldots, w^{n+1}\in \interior{(\recdual{C})}$ affinely independent. For every $i\in[n+1]$ let $w^i_0=\inf\{(w^i)^{\top}x\, | \ x\in C\}$. In view of Lemma~\ref{lemma:ConditionBoundedConicProgram} we have that  $w^i_0$ is finite for all $i\in[n+1]$. Since $w_0-w^{\top}z>0$ and $z$ is fixed, we can chose $\varepsilon_i>0$, $i\in[n+1]$, such that 
\begin{align}
| \sum_{i=1}^{n+1} \varepsilon_i(w^i)^{\top}z - \sum_{i=1}^{n+1} \varepsilon_iw_0^i | < w_0-w^{\top}z.\label{ineq:wz}
\end{align}
Moreover, since $w^1,w^2,\dots, w^{n+1}$ are affinity independent, the cone generated by these vectors is full dimensional. Thus, the scalars $\varepsilon_i>0$, $i\in[n+1]$, can be chosen such that
$\pi:=w+\sum_{i=1}^{n+1} \varepsilon_iw^i\in\mathbb{Q}^n$. Now observe that
\begin{align*}
\pi^{\top}z < w_0 +\sum_{i=1}^{n+1}\varepsilon_i w^i_0 &\leq \inf\{w^{\top}x \, | \ x\in C\} + \sum_{i=1}^{n+1} \inf\{(\varepsilon_i w^i)^{\top}x \, | \ x\in C\}\\
&\leq \inf\{(w^{\top} + \sum_{i=1}^{n+1} \varepsilon_i w^i)^{\top}x \, | \ x\in C\}\leq \pi^{\top}x \ \ \forall x\in C,
\end{align*}
where the first strict inequality follows from (\ref{ineq:wz}). Therefore, $\pi^{\top}z < \pi_0 \leq \pi^{\top}x$ for all $x\in C$, where $\pi_0:=w_0 +\sum_{i=1}^{n+1}\varepsilon_i w^i_0$.
\end{proof}
The next result will imply Theorem \ref{thm:PolyhedralOuterApproxOfBoundedConicCuts}.
\begin{proposition}\label{prop:PolyhedralOuterApproxOfBoundedConicCuts}
Let $T$ be the set as in the statement of Lemma~\ref{lem:strdual}. Consider the set
$
B:=\{x\in T\, | \ \pi^{\top}x\leq \pi_0\},
$
where $\pi\in\mathbb{Z}^n$ is non-zero. Then $B$ is bounded if and only if $\pi\in\textup{interior}(\recdual{T})$, in which case for some natural number $p'$, there exist vectors $y^1,y^2\dots,y^{p'}\in K^*$ such that the polyhedron
\begin{align*}
P=\{x\in\mathbb{R}^n \,| \ \pi^{\top}x\leq \pi_0, \ (y^i)^{\top}A x \geq (y^i)^{\top}b, \ i\in [p']\}
\end{align*}
contains $B$ and $P^I=B^I$, where $(y^i)^{\top}A$ is rational for all $i\in[p']$.
\end{proposition}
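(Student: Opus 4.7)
First I would dispatch the boundedness equivalence. Since $T$ is closed convex and $\{x\mid\pi^\top x\leq\pi_0\}$ is a closed convex polyhedron, their non-empty intersection $B$ has recession cone
\begin{equation*}
\rec{B}=\rec{T}\cap\{x\mid\pi^\top x\leq 0\}.
\end{equation*}
Thus $B$ is bounded iff $\rec{B}=\{0\}$, iff $\pi^\top x>0$ for every $x\in\rec{T}\setminus\{0\}$, which by Lemma~\ref{lemma:InteriorOfDualCone} applied to the closed cone $\rec{T}$ is precisely $\pi\in\interior(\recdual{T})$.

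Assuming $\pi\in\interior(\recdual{T})$, I would build $P$ in two stages. In the first stage I construct a bounded rational polyhedral outer approximation $P_0\supseteq B$ of the required form. The plan is to pick finitely many \emph{rational} directions $d^1,\dots,d^{p_0}\in\interior(\recdual{T})$ for which $\{x\mid\pi^\top x\leq 0,\ (d^i)^\top x\geq 0\ \forall i\}=\{0\}$; this forces $\rec{P_0}=\{0\}$ and hence $P_0$ bounded. To find them, I consider the compact set $S:=\{x\mid\pi^\top x\leq 0,\ \|x\|=1\}$. For each $x\in S$, since $x\notin\rec{T}=(\recdual{T})^*$, the non-empty open set $\{d\in\interior(\recdual{T})\mid d^\top x<0\}$ contains a rational vector $d_x$. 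The open sets $\{y\in S\mid d_x^\top y<0\}$ cover $S$, and by compactness I extract a finite subcover, yielding the desired $d^i$. For each $d^i$, Lemma~\ref{lemma:ConditionBoundedConicProgram}(i) makes $\inf\{(d^i)^\top x\mid x\in T\}$ finite, so Lemma~\ref{lem:strdual} produces $y^i\in K^*$ with $(y^i)^\top A=(d^i)^\top$ rational and $(y^i)^\top b$ equal to that infimum. Validity of $(y^i)^\top Ax\geq(y^i)^\top b$ over $T$ gives $B\subseteq P_0$, and the recession-cone calculation gives $P_0$ bounded.

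In the second stage I cut off the (necessarily finitely many) integer points of $P_0$ not in $B$. Every $z\in Z:=(P_0\cap\mathbb{Z}^n)\setminus B$ lies outside $T$ (otherwise $z\in T\cap\{\pi^\top x\leq\pi_0\}=B$). I apply Lemma~\ref{lemma:RationalSeparHyperplane} with $C=T$ to each such $z$ to obtain a rational $\pi^z\neq 0$ and $\pi_0^z$ with $(\pi^z)^\top z<\pi_0^z\leq(\pi^z)^\top x$ for all $x\in T$. The right inequality ensures $\inf\{(\pi^z)^\top x\mid x\in T\}\geq\pi_0^z$ is finite, so Lemma~\ref{lem:strdual} supplies $y^z\in K^*$ with $(y^z)^\top A=(\pi^z)^\top$ rational and $(y^z)^\top b$ equal to that infimum. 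The cut $(y^z)^\top Ax\geq(y^z)^\top b$ is valid for $T$ and strictly violated by $z$. Appending these $|Z|$ cuts to $P_0$ produces $P$: one has $P\supseteq B$ by validity over $T$, and $P\cap\mathbb{Z}^n=B\cap\mathbb{Z}^n$ by construction, whence $P^I=\textup{conv}(B\cap\mathbb{Z}^n)=B^I$, with total count $p':=p_0+|Z|$.

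The main obstacle is enforcing \emph{simultaneous} rationality of all vectors $(y^i)^\top A$, since neither the multipliers $y^i\in K^*$ nor the cone $K^*$ itself is rational in general. The resolution, shared by both stages, is to pick the separating direction $d$ (respectively $\pi^z$) rational \emph{first}, using openness of $\interior(\recdual{T})$ together with density of $\mathbb{Q}^n$, and only then to invoke Lemma~\ref{lem:strdual} backwards to exhibit a $K^*$-multiplier $y$ with $A^\top y=d$. This shifts the rationality requirement from $y$ onto $A^\top y$, which is exactly what the conclusion demands.
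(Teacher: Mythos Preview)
Your proof is correct and follows the same two-stage strategy as the paper: first build a bounded rational polyhedral outer approximation of $B$ using rational directions in $\interior(\recdual{T})$ realized via strong duality (Lemma~\ref{lem:strdual}), then cut off the finitely many stray integer points via Lemma~\ref{lemma:RationalSeparHyperplane} applied with $C=T$ and strong duality again. The only difference is cosmetic: for Stage~1 the paper constructs the directions explicitly as $\pi$ together with $\pi\pm\varepsilon v^i$ for a rational orthogonal basis $\{v^i\}$ of $\pi^\perp$ (yielding exactly $2n-1$ inequalities), rather than via your compactness extraction on the unit sphere, but the mechanism and the remainder of the argument are identical.
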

\begin{proof} 
Assume $B$ is bounded.
We claim that
$d^{\top}\pi>0, \ \text{for all} \ d\in \rec{T}\setminus\{0\}$.
Indeed, if $d\in\rec{T}$ is such that $d^{\top}\pi\leq 0$, then $d\in\rec{B}$, which implies that $d=0$ since $B$ is bounded.
Now, in view of Lemma~\ref{lemma:InteriorOfDualCone}, the claim implies that $\pi\in \interior{(\recdual{T})}$.

Assume $\pi\in\interior{(\recdual{T})}$.
As $\pi \in \mathbb{Z}^n$, let $\{v^1,v^2,\dots,v^{n-1}\} \subseteq \mathbb{Q}^n$ be an orthogonal basis of the linear subspace orthogonal to $\pi$. Since $\pi\in\interior{(\recdual{T})}$, there exists a positive constant $\varepsilon$ such that $w^i:=\pi+\varepsilon v^i$ and $w^{i+n-1}:=\pi-\varepsilon v^i$ belong to $\interior{(\recdual{T})}$ for all $i\in[n-1]$. As we may assume that $\varepsilon$ is rational, we obtain that $w^i$ is rational for all $i\in[2n-2]$. It follows from Lemma~\ref{lemma:ConditionBoundedConicProgram} and Lemma~\ref{lem:strdual} that for all $i\in[2n-2]$ there exists $y^i\in K^*$ such that
$(y^i)^{\top}Ax\geq (y^i)^{\top}b$ is a valid inequality for $T$, where $(y^i)^{\top}A = w^i \in \mathbb{Q}^n$. 
Since $\pi\in \interior{(\recdual{T})}$, Lemma~\ref{lemma:ConditionBoundedConicProgram} and Lemma~ \ref{lem:strdual} also imply that there exists $y^{2n-1} \in K^{*}$ such that $(y^{2n-1})^{\top}Ax \geq (y^{2n-1})^{\top}b$ is a valid inequality for $T$, where $(y^{2n-1})^{\top}A = \pi^{\top} \in \mathbb{Q}^n$. 
Now, let $P^1=\{x\in\mathbb{R}^n\, | \ \pi^{\top}x\leq \pi_0, \ (y^i)^{\top}A x\geq (y^i)^{\top}b, \ i\in[2n-1]\}$.
By our choice of $w^i$ and using the fact that $(y^{2n-1})^{\top}b\leq \pi^{\top}x\leq \pi_0$ for all $x\in P^1$ (if  $\pi_0\leq (y^{2n-1})^{\top}b$, then $P^1=\emptyset$), it is easy to verify that $P^1$ is bounded. Since $P^1$ contains $B$, we obtain that $B$ is also bounded.

If $(P^1)^I=B^I$, then we are done by setting $P$ to $P^1$, in which case $p'=2n-1$. Otherwise, as $P^1$ is bounded, there is only a finite number of integer points $z\in P^1\setminus B$. 
For each one of these points $z$, we construct a rational valid inequality $w_0\leq w^{\top}x$ for $T$ that is guaranteed by Lemma~\ref{lemma:RationalSeparHyperplane} that separates $z$ from $B$,  that is $w^{\top}z<w_0$. It remains to show that this inequality can be obtained `via dual multipliers': This is straightforward by again examining the conic program $\inf\{w^{\top}x\, | \ x\in T\}$ and applying Lemma~ \ref{lem:strdual}.
\end{proof}


\begin{proof} \emph{of Theorem~\ref{thm:PolyhedralOuterApproxOfBoundedConicCuts}}
Let $\pi^{\top}x \geq \pi_0$ be a valid inequality for $T^I$, where $\pi\in\mathbb{Z}^n$ is non-zero. Suppose $B= \{x\in T \,|\, \pi^{\top}x \leq \pi_0\}$ is nonempty and bounded. Then, by Proposition \ref{prop:PolyhedralOuterApproxOfBoundedConicCuts}, using dual multipliers $y^0,y^1,\dots,y^{p'} \in K^*$, and letting $P= \{x\in\mathbb{R}^n \,|\, \pi^{\top}x\leq \pi_0, \ (y^i)^{\top}A x \geq (y^i)^{\top}b, \ i\in [p']\}$, we have that 
(i) $P \supseteq B$ and (ii) $P \cap \mathbb{Z}^n = B \cap \mathbb{Z}^n$. Note that $\textup{interior}(B) \cap \mathbb{Z}^n= \emptyset$ and the only integer points in $B$ are those that satisfy $\pi^{\top}x = \pi_0$.

Now using an argument similar to Corollary 16.5a~\cite{Schrijverbook1}, there is a subset of $2^n$ inequalities defining $P$ together with $\pi^Tx < \pi_0$ such that the resulting set contains no integer points. WLOG $\{x\in\mathbb{R}^n \,|\, \ \pi^\top x \leq \pi_0, \  (y^i)^{\top}A x \geq (y^i)^{\top}b, \ i\in [p]\}$ is lattice-free, where $p \leq 2^n$, i.e., $\pi^{\top}x\geq \pi_0$ is a valid inequality for the integer hull of $Q = \{x\in\mathbb{R}^n \,|\, \ (y^i)^{\top}A x \geq (y^i)^{\top}b, \ i\in [p]\}$ where $(y^i)^{\top}A \in \mathbb{Q}^n$ for $i \in [p]$.
\end{proof}

\begin{remark}
If $T \cap \mathbb{Z}^n \neq \emptyset$, then using the same argument as in the proof of Corollary 16.6~\cite{Schrijverbook1} (also see~\cite{NYAS:NYAS284}), the bound of $2^n$ in Theorem~\ref{thm:PolyhedralOuterApproxOfBoundedConicCuts} can be improved to $2^{n}-1$.
\end{remark}


The next proposition illustrates that if the set $B$ in the statement of Theorem~\ref{thm:PolyhedralOuterApproxOfBoundedConicCuts} is not bounded, then the result may not hold.

\begin{proposition}\label{prop:NoGoodOuterApproximationForHyperbola}
Let $T' :=\{(x\in \mathbb{R}^2_{+}\,|\, x_1 x_2 \geq 1\}$. Every polyhedral outer approximation of $T'$ contains points of the form $(0,k)$ (and similarly points of form $(k,0)$) for $k$ sufficiently large natural number.
\end{proposition}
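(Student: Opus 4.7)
The plan is to let $P$ be an arbitrary polyhedron with $T' \subseteq P$ and then combine two observations: first, the direction $(0,1)$ is a recession direction of $P$; second, the projection of $P$ onto the $x_1$-axis must contain $0$. Together these force a whole upward ray along the $x_2$-axis to sit inside $P$, which yields the conclusion. A symmetric argument handles the points $(k,0)$.

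For the first observation, I would note that the half-line $\{(1, 1+t) : t \geq 0\}$ lies in $T'$ since $1 \cdot (1+t) \geq 1$, and therefore in $P$. Writing $P = \{x : Ax \leq b\}$ and letting $t \to \infty$ in the inequalities $A(1, 1+t)^{\top} \leq b$ shows that $A(0,1)^{\top} \leq 0$, i.e., $(0,1) \in \rec{P}$. For the second observation, consider the sequence $(1/n, n) \in T' \subseteq P$ for $n = 1, 2, 3, \ldots$. The projection $\pi_1(P) := \{x_1 \in \mathbb{R} : \exists x_2 \text{ with } (x_1, x_2) \in P\}$ is itself a polyhedron in $\mathbb{R}$ by Fourier--Motzkin elimination, hence a closed set. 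Since $\pi_1(P)$ contains $1/n$ for every $n \geq 1$, by closedness it also contains the limit $0$. So there exists some $y_2 \in \mathbb{R}$ with $(0, y_2) \in P$.

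Combining the two facts, $(0, y_2) + t\,(0,1) = (0, y_2 + t) \in P$ for every $t \geq 0$, so $(0, k) \in P$ for every $k \geq y_2$, which is exactly the claim. The symmetric statement about $(k,0)$ follows in the same way, using the sequence $(n, 1/n) \in T'$ and the recession direction $(1,0) \in \rec{P}$. There is no real obstacle here; the only point requiring a little care is invoking that the projection of a polyhedron onto a coordinate axis is a polyhedron (and in particular closed), which is what ensures $0 \in \pi_1(P)$ rather than merely $0$ being an accumulation point.
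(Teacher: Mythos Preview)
Your proof is correct. Both your argument and the paper's rest on the same first observation, namely that $(0,1)$ lies in $\rec{P}$ (the paper phrases it as $\rec{P}\supseteq\rec{T'}=\mathbb{R}^2_+$, which gives $\alpha^i_1,\alpha^i_2\ge 0$ for each defining inequality). The difference is in how each argument then places a point of $P$ on the $x_2$-axis. You take the sequence $(1/n,n)\in T'$, project to the $x_1$-coordinate, and invoke Fourier--Motzkin to conclude that the projection is closed and hence contains $0$; this yields some $(0,y_2)\in P$, and the recession direction finishes the job. The paper instead works constraint by constraint: having shown $\alpha^i_1,\alpha^i_2\ge 0$, it simply checks that each inequality $\alpha^i_1\cdot 0+\alpha^i_2 k\ge\beta_i$ holds once $k$ is large (splitting into the cases $\alpha^i_2=0$, where $\beta_i\le 0$ follows from evaluating the inequality along $(1/k,k)$, and $\alpha^i_2>0$, where $k\ge\beta_i/\alpha^i_2$ suffices). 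The paper's route is a touch more elementary since it avoids invoking closedness of projections, while your route is arguably cleaner in that it separates the two ingredients (a point on the axis and a recession direction) and never needs to open up the inequality description of $P$ beyond verifying the recession direction.
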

\begin{proof}
Suppose $\{x \in \mathbb{R}^2\,|\,\alpha^i_{1}x_1+\alpha^i_{2}x_2\geq \beta_i, \ i \in [q]\},$ is a polyhedral outer approximation of $T'$ where $q$ is some natural number. Since the recession cone of this polyhedron contains the recession cone of $T'$, that is $\mathbb{R}^2_{+}$, we have that $\alpha^i_{1}, \alpha^i_{2} \geq 0$. 

We will prove that there exist points of the form $(0,k)$ belonging to this outer approximation by showing that for all $i\in [q]$ there exists a $k_i$ such that $(\alpha^i)^{\top}(0, t) \geq \beta_i$ for all $t \in [k_i, \infty) \cap \mathbb{Z}$.  If $\alpha^i_{2} = 0$, then $\beta_i \leq 0$ (since $\alpha^i_{1}/k + \alpha^i_{2}k \geq \beta_i$ for all $k \in \mathbb{R}_{+}$). Therefore $k_i = 0$. If $\alpha^i_{2} > 0$, then $k_i = \beta_i/\alpha^i_{2}$.  \end{proof}

\section{A family of cut-generating functions in $\subfcts{\mathcal{L}^m}$ and its properties}
\label{sec:result2}

In this section, we show that $f_{\gamma}$ defined in (\ref{OurFormula}) belongs to $\subfcts{K}$.
Clearly $f_{\gamma}$ satisfies property (3.) in the definition of $\subfcts{K}$, that is $f_{\gamma}(0)=0$.  In Proposition~\ref{prop:OurFormulaIsSubadditive} and \ref{prop:OurFormulaIsNondecrLn} we prove that $f_{\gamma}$ also satisfies properties (1.) and (2.).

\begin{proposition}\label{prop:OurFormulaIsSubadditive}
The function $f_{\gamma}$ defined in (\ref{OurFormula}) is subadditive.
\end{proposition}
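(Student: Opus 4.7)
The plan is to prove subadditivity by a direct case analysis driven by the two conditions that define the branches of $f_\gamma$. Let $a = \gamma^{\top} u$ and $b = \gamma^{\top} v$, so that $\gamma^{\top}(u+v) = a+b$. The starting observation is a uniform lower bound: for every $w \in \mathbb{R}^m$,
\begin{equation*}
f_\gamma(w) \;\geq\; \lceil \gamma^{\top} w \rceil,
\end{equation*}
which is immediate because in the top branch $f_\gamma(w) = \gamma^{\top} w + 1 = \lceil \gamma^{\top} w \rceil + 1$ (since $\gamma^{\top} w \in \mathbb{Z}$), and in the bottom branch $f_\gamma(w) = \lceil \gamma^{\top} w \rceil$. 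I will combine this with the elementary ceiling inequality $\lceil a \rceil + \lceil b \rceil \geq \lceil a+b \rceil$ as the workhorse.

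First I would dispatch the easy case in which $f_\gamma(u+v) = \lceil a+b \rceil$, i.e. either $(u+v)_j = 0$ or $a+b \notin \mathbb{Z}$. Here the baseline bound gives
\begin{equation*}
f_\gamma(u) + f_\gamma(v) \;\geq\; \lceil a \rceil + \lceil b \rceil \;\geq\; \lceil a+b \rceil \;=\; f_\gamma(u+v),
\end{equation*}
with no further work. All the content lies in the remaining case: $(u+v)_j \neq 0$ and $a+b \in \mathbb{Z}$, in which $f_\gamma(u+v) = a+b+1$ and an extra $+1$ must be produced on the left.

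Since $(u+v)_j \neq 0$, at least one of $u_j, v_j$ is nonzero; WLOG $u_j \neq 0$. I would then split on the integrality of $a$. If $a \notin \mathbb{Z}$, then since $a+b \in \mathbb{Z}$ we also have $b \notin \mathbb{Z}$, and writing $a = k_1 + \alpha$, $b = k_2 - \alpha$ with $\alpha \in (0,1)$ gives $\lceil a \rceil + \lceil b \rceil = (k_1+1) + k_2 = a+b+1$, so the baseline bound alone suffices. If $a \in \mathbb{Z}$, then $b \in \mathbb{Z}$ as well, and the top branch of $f_\gamma$ activates at $u$ (because $u_j \neq 0$ and $a \in \mathbb{Z}$), delivering $f_\gamma(u) = a+1$; combined with $f_\gamma(v) \geq \lceil b \rceil = b$ this yields $f_\gamma(u) + f_\gamma(v) \geq a+b+1$.

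The main (very modest) obstacle is bookkeeping: making sure the subcases partition $\mathbb{R}^m \times \mathbb{R}^m$ and that the ``bonus $+1$'' on the right is always matched either by the parity trick $\lceil a\rceil + \lceil b\rceil = a+b+1$ (when neither of $a,b$ is an integer) or by the top branch firing at $u$ or $v$ (when both are integers). Note that Theorem~\ref{thm:newfunction} does not require any property of $\gamma$ beyond it being a real vector in this step, so the hypothesis $\gamma \in \Gamma_j \cup \interior{(\mathcal{L}^m)}$ plays no role in subadditivity and will only be needed later for monotonicity with respect to $\mathcal{L}^m$.
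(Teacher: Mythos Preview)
Your argument is correct and is essentially the same as the paper's: both proofs hinge on the lower bound $f_\gamma(w)\ge\lceil\gamma^\top w\rceil$, the ceiling subadditivity $\lceil a\rceil+\lceil b\rceil\ge\lceil a+b\rceil$, and the observation that when $a+b\in\mathbb{Z}$ either both $a,b$ are non-integers (giving $\lceil a\rceil+\lceil b\rceil=a+b+1$) or the top branch fires at one of $u,v$ to supply the extra $+1$. The only cosmetic difference is the order of the case split (you branch first on which clause $u+v$ falls into, whereas the paper branches first on whether $u$ or $v$ already lies in the top clause); your remark that the hypothesis on $\gamma$ is irrelevant for subadditivity is also in line with the paper, which uses that hypothesis only in the monotonicity proof.
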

\begin{proof}
Let $u,v\in\mathbb{R}^m$. If at least one of these vectors fits in the first clause of (\ref{OurFormula}), then we have
$$f_{\gamma}(u+v)\leq \left\lceil\gamma^{\top}(u+v)\right\rceil +1 \leq \left\lceil\gamma^{\top}u\right\rceil + \left\lceil\gamma^{\top}v\right\rceil +1\leq f_{\gamma}(u)+f_{\gamma}(v).$$
Now, suppose that neither $u$ nor $v$ satisfies the first clause. If $u+v$ does not fit in the first clause, then we are done because $\lceil\cdot\rceil$ is a subadditive function. Assume $u+v$ satisfies the first clause, that is 
\begin{align}
u_j+v_j\neq 0, \ \ \gamma^{\top}(u+v)=\gamma^{\top}u+\gamma^{\top}v \in \mathbb{Z}.\label{eq:a1}
\end{align}
In this case, $u_j$ and $v_j$ cannot be simultaneously zero, say $u_j\neq 0$. Then
\begin{align}
\gamma^{\top}u &\notin \mathbb{Z},\label{eq:a2}
\end{align}
because $u$ does not satisfies the first clause.
It follows from (\ref{eq:a1}) and (\ref{eq:a2}) that
\begin{align}
\gamma^{\top}v &\notin \mathbb{Z}. \label{eq:a3}
\end{align}
Finally, (\ref{eq:a1}), (\ref{eq:a2}), (\ref{eq:a3}) together imply
\begin{align*}
f_{\gamma}(u)+f_{\gamma}(v)=\left\lceil\gamma^{\top}u\right\rceil + \left\lceil\gamma^{\top}v\right\rceil = \gamma^{\top}u+\gamma^{\top}v+1=f_{\gamma}(u+v),
\end{align*}
where the second inequality follows from the fact that $\gamma^{\top}u+\gamma^{\top}v\in\mathbb{Z}$.
\end{proof}

\begin{lemma}\label{lemma:StrictlyIneq}
Let $w\in\mathcal{L}^m$ and $j\in[m-1]$. Let $\Gamma_j$ be the set as in the statement of Theorem~\ref{thm:newfunction}. If $\gamma \in \mathcal{L}^m$, then $\gamma^{\top}w\geq 0$. If, in addition, $\gamma \in\Gamma_j\cup\interior{( \mathcal{L}^m)}$ and  $w_j\neq 0$, then $\gamma^{\top}w> 0$.
\end{lemma}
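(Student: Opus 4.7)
The plan is to get the non-strict inequality from self-duality of $\mathcal{L}^m$, which immediately yields $\gamma^{\top}w\geq 0$ whenever $\gamma,w\in\mathcal{L}^m$ (and in particular for $\gamma\in\Gamma_j\cup\interior{(\mathcal{L}^m)}$, once we note $\Gamma_j\subseteq\mathcal{L}^m$). For the strict inequality I would split on whether $\gamma\in\interior{(\mathcal{L}^m)}$ or $\gamma\in\Gamma_j$. In the first subcase, Lemma~\ref{lemma:InteriorOfDualCone} combined with $\mathcal{L}^{m*}=\mathcal{L}^m$ says that $\gamma^{\top}x>0$ for every $x\in\mathcal{L}^m\setminus\{0\}$; since $w_j\neq 0$ forces $w\neq 0$, this subcase is done.

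The substantive subcase is $\gamma\in\Gamma_j$. I would first confirm $\Gamma_j\subseteq\mathcal{L}^m$ via $\gamma_m\geq\sum_{i=1}^{m-1}|\gamma_i|\geq\|\gamma_{1..m-1}\|_2$, then decompose
\begin{equation*}
\gamma^{\top}w=\gamma_m w_m+\gamma_j w_j+\sum_{i\in[m-1]\setminus\{j\}}\gamma_i w_i
\end{equation*}
and lower-bound the last two terms using Cauchy--Schwarz together with two consequences of the defining inequalities: $\sum_{i\in[m-1]\setminus\{j\}}\gamma_i^2\leq(\gamma_m-|\gamma_j|)^2$ (from $\sum_{i\in[m-1]\setminus\{j\}}|\gamma_i|\leq\gamma_m-|\gamma_j|$, which is the first defining inequality of $\Gamma_j$ with $|\gamma_j|$ moved to the other side) and $\sum_{i\in[m-1]\setminus\{j\}}w_i^2\leq w_m^2-w_j^2$ (from $\|w_{1..m-1}\|_2\leq w_m$). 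This produces the master bound
\begin{equation*}
\gamma^{\top}w\geq\gamma_m w_m-|\gamma_j|\,|w_j|-(\gamma_m-|\gamma_j|)\sqrt{w_m^2-w_j^2}.
\end{equation*}

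To extract strictness I would make a short case split on $\gamma_j$. If $\gamma_j=0$, the right-hand side collapses to $\gamma_m\bigl(w_m-\sqrt{w_m^2-w_j^2}\bigr)$, which is strictly positive because $w_j\neq 0$ and $\gamma_m>|\gamma_j|=0$. If $\gamma_j\neq 0$, I would apply a two-dimensional Cauchy--Schwarz to the pairs $(|\gamma_j|,\gamma_m-|\gamma_j|)$ and $(|w_j|,\sqrt{w_m^2-w_j^2})$ to bound $|\gamma_j|\,|w_j|+(\gamma_m-|\gamma_j|)\sqrt{w_m^2-w_j^2}\leq\sqrt{\gamma_j^2+(\gamma_m-|\gamma_j|)^2}\cdot w_m$; a direct expansion then shows $\gamma_j^2+(\gamma_m-|\gamma_j|)^2<\gamma_m^2$ whenever $0<|\gamma_j|<\gamma_m$, which is exactly what $\Gamma_j$ guarantees. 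Thus the subtracted quantity is strictly less than $\gamma_m w_m$, and $\gamma^{\top}w>0$. The main obstacle I anticipate is precisely pinning down where the strict inequality originates: neither the strict condition $\gamma_m>|\gamma_j|$ from $\Gamma_j$ nor the hypothesis $w_j\neq 0$ alone forces every one of the intermediate bounds to be strict, which is why a case split on $\gamma_j$ (equivalently, on whether the two two-vectors above are parallel) seems unavoidable.
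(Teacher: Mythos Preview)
Your proof is correct. The non-strict part and the subcase $\gamma\in\interior(\mathcal{L}^m)$ match the paper exactly, but your treatment of $\gamma\in\Gamma_j$ is genuinely different. The paper first reduces to the case where \emph{both} $\gamma$ and $w$ lie on the boundary of $\mathcal{L}^m$ (using Lemma~\ref{lemma:InteriorOfDualCone} again when $w\in\interior(\mathcal{L}^m)$), and then argues via an $\ell_1$-weighted sum: from $w_m\geq \pm w_i$ it multiplies by $|\gamma_i|$ and adds, obtaining $\sum_{i<m}|\gamma_i|\,w_m\geq -\sum_{i<m}\gamma_i w_i$, and then invokes $\gamma_m\geq\sum_{i<m}|\gamma_i|$. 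Strictness comes from two observations that only hold on the boundary: (i) $w_j\neq 0$ forces $w_m>|w_i|$ for every $i\neq j$, and (ii) $\gamma_m>|\gamma_j|$ forces $\gamma_i\neq 0$ for some $i\neq j$. By contrast, you never reduce $w$ to the boundary and instead drive everything through two applications of Cauchy--Schwarz, with a case split on $\gamma_j$ to locate the strict inequality. Your route is slightly heavier computationally but avoids the extra case analysis on $w$; the paper's route is more elementary (no Cauchy--Schwarz, just sign-aware addition of linear inequalities) at the cost of that additional reduction.
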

\begin{proof}
We have that $\gamma\in\mathcal{L}^m$. Therefore, since $w\succeq_{\mathcal{L}^m} 0$ and $\mathcal{L}^m$ is a self-dual cone, we conclude that $\gamma^{\top}w\geq 0$. Now, assume $w_j\neq 0$. If either $\gamma$ or $w$ is in the interior of $\mathcal{L}^m$, then it follows directly from Lemma~\ref{lemma:InteriorOfDualCone} that  $\gamma^{\top}w> 0$. Assume $\gamma,w\notin\interior{( \mathcal{L}^m)}$. Then
\begin{align}
w_m&=\sqrt{w_1^2+w_2^2+\dots+w_{m-1}^2} \label{eq:w}\\
\gamma_m&=\sqrt{\gamma_1^2+\gamma_2^2+\dots+\gamma_{m-1}^2}.\label{eq:gamma}
\end{align}
Two observations follows: (i) as $w_j\neq 0$, equation (\ref{eq:w}) implies that for all $i\in[m-1] $ such that $i\neq j$ we have
$w_m > |w_i|$; 
(ii) since $\gamma_m>|\gamma_j|$, equation (\ref{eq:gamma}) implies that $\gamma_i\neq 0$ for some $i\in[m-1] $ such that $i\neq j$.
Now, for all $i\in[m-1]$ such that $\gamma_i\geq 0$, we multiply $w_m > -w_i$ by $\gamma_i$ and, for all $i\in[m-1]$ such that $\gamma_i< 0$, we multiply $w_m > w_i$ by $-\gamma_i$. In view of observations (i) and (ii), at least one of the resulting inequalities remains strict. Then adding them all we obtain
\begin{align*}
\sum_{i\in[m-1]: \ \gamma_i\geq 0}\gamma_iw_m + \sum_{i\in[m-1]: \ \gamma_i< 0}-\gamma_iw_m &> \sum_{i\in[m-1]: \ \gamma_i\geq 0}\gamma_i (-w_i) + \sum_{i\in[m-1]: \ \gamma_i< 0}(-\gamma_i) w_i \\
\Rightarrow \ \sum_{i\in[m-1]}|\gamma_i|w_m &> -\sum_{i\in[m-1]}\gamma_iw_i \\
\Rightarrow \ \gamma_m w_m &> -\sum_{i\in[m-1]}\gamma_iw_i ,
\end{align*}
where the last implication follows from the fact that $ \gamma_m\geq \sum_{i=1}^{m-1}|\gamma_i|$ and $w_m\geq 0$. The result follows from this last inequality.
\end{proof}

\begin{proposition}\label{prop:OurFormulaIsNondecrLn}
The function $f_{\gamma}$ defined in (\ref{OurFormula}) is non-decreasing with respect to $\mathcal{L}^m$.
\end{proposition}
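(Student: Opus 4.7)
The plan is to fix $u, v \in \mathbb{R}^m$ with $u \succeq_{\mathcal{L}^m} v$, set $w := u - v \in \mathcal{L}^m$, and show $f_\gamma(u) \geq f_\gamma(v)$ by a short case analysis based on which clause of the definition \eqref{OurFormula} each of $u$ and $v$ falls into. The main tool will be Lemma~\ref{lemma:StrictlyIneq}: since $\gamma \in \Gamma_j \cup \interior(\mathcal{L}^m) \subseteq \mathcal{L}^m$ and $w \in \mathcal{L}^m$, we get $\gamma^{\top} w \geq 0$, i.e.\ $\gamma^{\top} u \geq \gamma^{\top} v$; and moreover $\gamma^{\top} w > 0$ whenever $w_j \neq 0$.

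I would split into four cases according to whether $u$ and $v$ satisfy the first clause. The easy cases are: (a) if both $u$ and $v$ land in the same clause, the inequality reduces directly to $\gamma^{\top} u \geq \gamma^{\top} v$ (using monotonicity of $\lceil\cdot\rceil$ in the ``otherwise'' case); (b) if $u$ is in the first clause and $v$ is in the ``otherwise'' clause, then $\gamma^{\top} u \in \mathbb{Z}$ and $\gamma^{\top} u \geq \gamma^{\top} v$ imply $\gamma^{\top} u \geq \lceil \gamma^{\top} v \rceil$, hence $f_\gamma(u) = \gamma^{\top} u + 1 > \lceil \gamma^{\top} v \rceil = f_\gamma(v)$.

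The delicate case, and the expected main obstacle, is when $v$ is in the first clause but $u$ is not: here we need $\lceil \gamma^{\top} u \rceil \geq \gamma^{\top} v + 1$, even though a priori Lemma~\ref{lemma:StrictlyIneq} only yields $\gamma^{\top} u \geq \gamma^{\top} v$. The resolution is to use the very reason $u$ falls into the ``otherwise'' branch: either $\gamma^{\top} u \notin \mathbb{Z}$, in which case $\lceil \gamma^{\top} u \rceil > \gamma^{\top} u \geq \gamma^{\top} v$, and since $\gamma^{\top} v \in \mathbb{Z}$ this forces $\lceil \gamma^{\top} u \rceil \geq \gamma^{\top} v + 1$; or $u_j = 0$, in which case $w_j = -v_j \neq 0$ (because $v$ is in the first clause), so Lemma~\ref{lemma:StrictlyIneq} upgrades the inequality to $\gamma^{\top} u > \gamma^{\top} v$, and now integrality of $\gamma^{\top} u$ and $\gamma^{\top} v$ (the former because $u$ is in the ``otherwise'' clause only by virtue of $u_j = 0$, with $\gamma^{\top} u \in \mathbb{Z}$ still possible) gives $\lceil \gamma^{\top} u \rceil \geq \gamma^{\top} u \geq \gamma^{\top} v + 1$.

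Combining these cases covers all possibilities and yields $f_\gamma(u) \geq f_\gamma(v)$, completing the proof. The crux is recognizing that the asymmetry in the definition of $f_\gamma$ (the $+1$ bump when $v_j \neq 0$) is exactly compensated by the strict positivity of $\gamma^{\top}(u - v)$ whenever the $j$-th coordinate changes from non-zero to zero under a step in $\mathcal{L}^m$, which is the content of Lemma~\ref{lemma:StrictlyIneq}.
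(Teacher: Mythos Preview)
Your proof is correct and follows essentially the same approach as the paper's: both set $w=u-v$, invoke Lemma~\ref{lemma:StrictlyIneq} to get $\gamma^{\top}u\geq\gamma^{\top}v$ (strict when $w_j\neq 0$), and then do the same case analysis on which clause of \eqref{OurFormula} each of $u,v$ falls into, with the only nontrivial case being ``$v$ in the first clause, $u$ not.'' One small wording quibble: in your $u_j=0$ sub-case you invoke ``integrality of $\gamma^{\top}u$,'' which is not guaranteed by $u_j=0$ alone---but since the $\gamma^{\top}u\notin\mathbb{Z}$ sub-case already covers that possibility, it is harmless to assume $\gamma^{\top}u\in\mathbb{Z}$ in the remaining branch (or, equivalently, just use that $\lceil\gamma^{\top}u\rceil$ is an integer strictly greater than the integer $\gamma^{\top}v$).
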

\begin{proof}
Let $u,v\in\mathbb{R}^m$. Suppose 
$u\succeq_{\mathcal{L}^m} v$. By applying Lemma~\ref{lemma:StrictlyIneq} to $w=u-v$ we conclude that
\begin{align}
\gamma^{\top}u\geq \gamma^{\top}v,\label{ineq:Conic}
\end{align}  
where the inequality (\ref{ineq:Conic}) holds strictly whenever $u_j-v_j\neq 0$.
Now, we use these facts to prove that 
$
f_{\gamma}(v)\leq f_{\gamma}(u).
$
If $u$ fits in the first clause of (\ref{OurFormula}), then
$
f_{\gamma}(v)\leq \gamma^{\top}v+1\leq \gamma^{\top}u+1=f_{\gamma}(u),
$
where the second inequality follows from (\ref{ineq:Conic}).
Assume $u$ does not satisfies the first clause. If $v$ does not fit in the first clause, then the result follows directly from (\ref{ineq:Conic}) and the fact that $\lceil\cdot\rceil$ is non-decreasing. Suppose $v$ satisfies the first clause, that is $v_j\neq 0$ and $\gamma^{\top}v\in\mathbb{Z}$. In this case, if $u_j= 0$, then $u_j-v_j\neq 0$ and hence (\ref{ineq:Conic}) holds strictly. Therefore, we conclude that
$
f_{\gamma}(v)=\gamma^{\top}v+1 \leq \lceil\gamma^{\top}u \rceil=f_{\gamma}(u).
$
On the other hand, if $u_j\neq 0$, then $\gamma^{\top}u\notin\mathbb{Z}$ (since $u$ does not satisfy the first clause), and using (\ref{ineq:Conic}) we obtain
$
\gamma^{\top}v < \lceil\gamma^{\top}u\rceil$ and hence $f_{\gamma}(v)=\gamma^{\top}v+1 \leq \lceil\gamma^{\top}u\rceil=f_{\gamma}(u),
$
which completes the proof.
\end{proof}
\color{black}

%

\section{Application of cut-generating functions in $\mathbb{R}^2$}\label{sec:result3}

In this section, we will prove Theorem~\ref{thm:2d}. We begin with proofs of two technical lemmas.
\begin{lemma}\label{lemma:ParabolaUnboundedProblem}
Let $W^{i}=\{x\in\mathbb{R}^2\, | \ A^ix\succeq_{\mathcal{L}^{m_i}}b^i\}$ be a parabola, where 
$A^{i}\in\mathbb{R}^{m_{i}\times 2}$, $b^{i}\in\mathbb{R}^{m_{i}}$ and $\mathcal{L}^{m_{i}}$ is the second-order cone in $\mathbb{R}^{m_{i}}$. If $\pi\in\recdual{W^{i}}\setminus\interior{(\recdual{W^{i}})}$, $\pi\neq 0$, then the problem
$
\inf \{\pi^{\top}x\, | \ x\in W^{i}\}
$
is unbounded.
\end{lemma}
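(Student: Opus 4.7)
The plan is to reduce the problem to the canonical parabolic region $P=\{(u,v)\in\mathbb{R}^2\,|\,v\geq u^2\}$ via an invertible affine change of coordinates. Since $W^{i}$ is a parabolic region in $\mathbb{R}^2$, classical analytic geometry gives an invertible affine map $\Phi(x)=Mx+t$ with $\Phi(W^{i})=P$. Affine changes of variable preserve recession cones up to the linear part (i.e.\ $\rec{P}=M\rec{W^{i}}$), transform dual cones by $(M^{-1})^{\top}$, and preserve the boundedness status of $\inf\{\pi^{\top}x\,|\,x\in W^{i}\}$, since this infimum equals $\inf\{\tilde{\pi}^{\top}y\,|\,y\in P\}+\text{const}$ for a suitable nonzero $\tilde{\pi}$. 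Hence it suffices to prove the claim in the canonical coordinates.

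Next I would compute $\rec{P}$ explicitly: $(d_1,d_2)\in\rec{P}$ iff $v_0+td_2\geq (u_0+td_1)^2$ for all $t\geq 0$ and some fixed $(u_0,v_0)\in P$. Dividing by $t^2$ and letting $t\to\infty$ forces $d_1=0$; then the inequality reduces to $d_2\geq 0$. Thus $\rec{P}=\{0\}\times\mathbb{R}_+$ is a single ray, and consequently
\[
\recdual{P}=\mathbb{R}\times\mathbb{R}_+,\qquad \interior{(\recdual{P})}=\mathbb{R}\times\mathbb{R}_{++},
\]
so $\recdual{P}\setminus\interior{(\recdual{P})}=\mathbb{R}\times\{0\}$. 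Translating the hypothesis $\pi\in\recdual{W^{i}}\setminus\interior{(\recdual{W^{i}})}$, $\pi\neq 0$, through $\Phi$ yields a nonzero $\tilde{\pi}$ of the form $(\alpha,0)$ with $\alpha\neq 0$.

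Finally I would observe that the projection of $P$ onto its first coordinate is all of $\mathbb{R}$ (for any $u\in\mathbb{R}$ the point $(u,u^2)\in P$), so
\[
\inf\{\tilde{\pi}^{\top}(u,v)\,|\,(u,v)\in P\}=\inf_{u\in\mathbb{R}}\alpha u=-\infty.
\]
By the transformation invariance stated above, the original infimum $\inf\{\pi^{\top}x\,|\,x\in W^{i}\}$ is likewise unbounded below, completing the proof.

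The main obstacle I anticipate is justifying the affine reduction cleanly; in particular one must either (i) invoke the fact that any parabolic region in $\mathbb{R}^2$ is affinely equivalent to $P$, or (ii) work directly from the conic description $A^{i}x\succeq_{\mathcal{L}^{m_i}}b^{i}$ to extract the axis direction of the parabola and identify it as spanning $\rec{W^{i}}$. The first route is the least technical, provided one is willing to use the standard classification of planar conic sections, so that is the route I would follow.
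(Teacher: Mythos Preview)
Your proof is correct and follows essentially the same approach as the paper: reduce to a canonical parabolic region, identify the recession cone as a single ray (the axis direction), deduce that any $\pi$ on the boundary of the dual recession cone has zero component along the axis, and conclude unboundedness because the parabola projects onto all of $\mathbb{R}$ in the transverse direction. The only cosmetic difference is that the paper uses a rotation to reach the form $y\geq \rho(x-x_0)^2+y_0$ and then argues in one line, whereas you carry out a full affine reduction to $v\geq u^2$ and spell out how recession cones, dual cones, and the objective transform; both arguments are the same idea at slightly different levels of detail.
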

\begin{proof}
Up to a rotation, any parabola in $\mathbb{R}^2$ can be written as
$
\{(x,y)\in\mathbb{R}^2\,| \ y\geq \rho(x-x_0)^2+y_0\},
$
where $\rho> 0$. In this case, the recession cone of the parabola is a vertical line. As $\pi\in \recdual{W^{i}}\setminus\interior{(\recdual{W^{i}})}$ we must have  $\pi_2=0$, in which case $\pi_1\neq 0$ and the problem is clearly unbounded.
\end{proof}

\begin{lemma}\label{lemma:OnlyOneConicIsEnough}
Let $W$ be the set as in the statement of Theorem~\ref{thm:2d}. Assume, in addition, that $W$ is unbounded. Let $\pi\neq 0$ be such that $\pi\notin \interior{(\recdual{W})}$. If the problem
\begin{align}
\alpha := \inf \{\pi^{\top} x\, | \ x \in W\}\label{primal}
\end{align}
is bounded, then there exists $i_0\in[m]$ such that
\begin{align}
\alpha = \inf \{\pi^{\top}x\, | \ W^{i_0}\}.\label{primal2}
\end{align}
Moreover, $W^{i_0}=\{x\in\mathbb{R}^2\, | \ A^{i_0}x\succeq_{\mathcal{L}^{m_{i_0}}}b^{i_0}\}$ is either:\\
(i) a half-space defined by $\pi^{\top}x\geq \alpha$; or\\ 
(ii) one branch of a hyperbola whose one of the asymptotes is orthogonal to $\pi$.
\end{lemma}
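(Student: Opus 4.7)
The plan is to exploit the boundary structure of $\recdual{W}$. Since the problem is bounded, Lemma~\ref{lemma:ConditionBoundedConicProgram}(ii) gives $\pi\in\recdual{W}$, and by assumption $\pi$ lies on the boundary of $\recdual{W}$, so Lemma~\ref{lemma:InteriorOfDualCone} provides a nonzero recession direction of $W$ orthogonal to $\pi$; such a direction automatically belongs to $\rec{W^i}$ for every $i\in[m]$. I will split the argument on whether $\alpha$ is attained on $W$.

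Case 1: $\alpha$ is attained at some $x^*\in W$. Fix a nonzero $d\in\rec{W}$ with $\pi^\top d=0$ and consider the half-line $L:=\{x^*+td\,|\,t\geq 0\}$. Because $d\in\rec{W}$ and $\pi^\top d=0$, $L$ lies in $W$ and in the supporting hyperplane $H:=\{x\,|\,\pi^\top x=\alpha\}$, so $L\subseteq\partial W\subseteq\bigcup_{i\in[m]}\partial W^i$. Ellipses are ruled out because $W$ is unbounded, and for every parabola or hyperbola branch $W^i$ the quadratic curve $\partial W^i$ meets the affine hull of $L$ in at most two points. Since $L$ is uncountable, some $i_0$ must satisfy $L\subseteq\partial W^{i_0}$, which forces $W^{i_0}$ to be a half-space whose boundary is exactly $H$; combined with $W\subseteq W^{i_0}$ this pins down $W^{i_0}=\{x\,|\,\pi^\top x\geq\alpha\}$, yielding case (i).

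Case 2: $\alpha$ is not attained. Pick $x^k\in W$ with $\pi^\top x^k\to\alpha$; closedness of $W$ forces $\|x^k\|\to\infty$, and after subsequencing $x^k/\|x^k\|\to d$ for a unit recession direction of $W$ with $\pi^\top d=0$. Moving each $x^k$ in direction $-\pi$ until it first meets $\partial W$ produces $y^k\in\partial W$ with $\pi^\top y^k\to\alpha$ and $y^k\to\infty$ in direction $d$. A pigeonhole then yields some $i_0$ with $y^k\in\partial W^{i_0}$ along a subsequence. A half-space $W^{i_0}$ is excluded, since $y^k\to\infty$ on its boundary line with $\pi^\top y^k\to\alpha$ forces the normal parallel to $\pi$, and the two sign choices yield either attainment of $\alpha$ (contradicting this case) or $W\subseteq H$ (contradicting the interior assumption). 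A parabola $W^{i_0}$ with axis $e$ is excluded because $d\in\rec{W^{i_0}}$ forces $d\parallel e$ and hence $\pi^\top e=0$, but a direct parameterization of $\partial W^{i_0}$ then shows $\pi^\top y$ is linear with nonzero slope in the lateral parameter, so no unbounded sequence on that curve can have bounded $\pi^\top$-values. Therefore $W^{i_0}$ is a hyperbola branch. Any sequence going to infinity on such a branch follows one of the two asymptotic directions, so $d$ is an asymptotic direction of $W^{i_0}$; the corresponding asymptote is then orthogonal to $\pi$ and, by the limit $\pi^\top y^k\to\alpha$, has equation $\pi^\top x=\alpha$. Because the convex region $W^{i_0}$ lies on a single side of each asymptote and on the same side as $W$, we obtain $W^{i_0}\subseteq\{x\,|\,\pi^\top x\geq\alpha\}$, hence $\inf_{W^{i_0}}\pi^\top x=\alpha$, which is case (ii). I expect the parabola exclusion to be the most delicate step, since it requires tracking the growth of $\pi^\top$ along the parabolic boundary rather than merely invoking Lemma~\ref{lemma:ParabolaUnboundedProblem}.
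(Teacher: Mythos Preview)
Your argument is correct and takes a genuinely different route from the paper. The paper proves the lemma via conic strong duality: since the primal is bounded and strictly feasible, the dual is solvable; using the recession direction $d_0\perp\pi$ they show every dual block satisfies $(y^i)^\top A^i=\lambda_i\pi^\top$ with $\lambda_i\ge 0$, and then argue that an optimal dual solution can be collapsed to a single nonzero block $y^{i_0}$, after which a short case analysis on the type of $W^{i_0}$ (using Lemma~\ref{lemma:ParabolaUnboundedProblem} to exclude parabolas) finishes. Your proof is purely geometric: the attained case produces an entire half-line in $\partial W$, and a counting argument against the finitely many quadratic boundaries forces a linear constraint to carry it; the non-attained case tracks a minimizing sequence escaping along $\partial W$ and uses the asymptotic geometry of each boundary type to pin down a hyperbola branch with the right asymptote. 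Your approach avoids duality entirely and is more elementary, at the cost of a longer case analysis and the need to argue directly about unbounded boundary sequences (your parabola exclusion is exactly where Lemma~\ref{lemma:ParabolaUnboundedProblem} alone would not suffice, and your parameterization argument handles it). The paper's approach is shorter and more uniform across the constraint types, but leans on solvability of the conic dual. Two small points worth tightening in a write-up: make explicit that $t_k\to 0$ so that $y^k-x^k\to 0$ (hence $y^k/\|y^k\|\to d$), and in the final hyperbola step spell out that $W^{i_0}\subseteq\{\pi^\top x\le\alpha\}$ is impossible because combined with $\inf_W\pi^\top x=\alpha$ it would force $W$ into the hyperplane $\pi^\top x=\alpha$, contradicting $\interior(W)\neq\emptyset$.
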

\begin{proof}
Since the primal problem (\ref{primal}) is bounded and strictly feasible, we have 
that its dual
\begin{align}
\sup\{\sum_{i=1}^m(b^i)^{\top}y^i\, | \ \sum_{i=1}^m (y^i)^{\top}A^i = \pi^{\top}, \ y^i\in \mathcal{L}^*_{m_i} \ \forall i \in [m]\}\label{dual}
\end{align}
is solvable \cite{NemirovskiNotes}. We will show that (\ref{dual}) admits an optimal solution for which $y^i=0$ for all $i\in [m]$ except for one particular $i_0\in [m]$.

Since (\ref{primal}) is bounded, it follows from Lemma~\ref{lemma:ConditionBoundedConicProgram} that $\pi\in \recdual{W}$. On the other hand, by assumption $\pi$ is not in the interior of that cone.  Therefore, using Lemma~\ref{lemma:InteriorOfDualCone} we conclude that there exists a non-zero vector $d_0\in \rec{W}$ such that
$
\pi^{\top}d_0 = 0.
$
Then any feasible solution $(y^1,y^2, \cdots, y^m)$ of (\ref{dual}) satisfies
\begin{align*}
0=\pi^{\top}d_0 = \sum_{i=1}^m (y^i)^{\top}A^id_0.
\end{align*}
Moreover, each term in this summation is non-negative since $A^id_0 \succeq_{\mathcal{L}_{m_i}} 0$ (recall $d_0\in \rec{W}$) and $y^i\in\mathcal{L}^*_{m_i}$, for all $i\in [m]$. As a result, we have
$
(y^i)^{\top}A^id_0 = 0 \ \forall i \in [m].
$
As $d_0$ is a non-zero vector in $\mathbb{R}^2$, we conclude that for each $i\in [m]$ there must exist a scalar $\lambda_i$ such that
\begin{align}
(y^i)^{\top}A^i = \lambda_i \pi^{\top}.\label{eq:pi}
\end{align}
We claim that $\lambda_i\geq 0$ for all $i\in [m]$. To prove the claim, all we need to show is that $(y^i)^{\top}A^i$ and $\pi$ are in the same half-space. By assumption $\pi\in\recdual{W}$. Since $\recdual{W}$ is contained in a half-space (otherwise we would have $\rec{W}=\{0\}$ which contradicts the fact that $W$ is unbounded), it is enough to prove that $(y^i)^{\top}A^i\in\recdual{W}$. To see why this is true, note that for all $d\in\rec{W^i}$ we have
$A^id \succeq_{\mathcal{L}_{m_i}} 0$, which implies $(y^i)^{\top}A^id \geq 0$. Thus, $(y^i)^{\top}A^i\in\recdual{W^i}\subseteq\recdual{W}$, where the last containment follows from the fact that $\rec{W^i} \supseteq \rec{W}$.

Now, suppose $(y^1,y^2, \cdots, y^m)$ is an optimal solution of the dual problem (\ref{dual}). If $\lambda_i=0$, then we must have $(b^i)^{\top}y^i = 0$, because $(b^i)^{\top}y^i > 0$ would imply the dual problem to be unbounded and $(b^i)^{\top}y^i<0$ would imply that the current solution is not optimal. Hence we have that if $\lambda_i=0$, then we can set $y^i=0$ without altering the objective value. On the other hand, since $\pi\neq 0$, (\ref{eq:pi}) combined with the equality in (\ref{dual}) imply that  the $\lambda$'s add up to $1$. Thus, we cannot have $\lambda_i=0$ for all $i\in[m]$. Suppose $\lambda_i,\lambda_j>0$ for some $i,j\in[m]$, $i\neq j$. We claim that $(b^i)^{\top}y^i=(\lambda_i/\lambda_j)(b^j)^{\top}y^j$.
Without loss of generality, assume by contradiction that $(b^i)^{\top}y^i<(\lambda_i/\lambda_j)(b^j)^{\top}y^j$.
Then, since $\lambda_i+\lambda_j\leq 1$ we obtain
\begin{align*}
(b^i)^{\top}y^i+(b^j)^{\top}y^j<\frac{\lambda_i}{\lambda_j}(b^j)^{\top}y^j+(b^j)^{\top}y^j\leq \frac{1}{\lambda_j}(b^j)^{\top}y^j.
\end{align*}
In this case, we could set $\lambda_i=0, \ \lambda_j=1$ and $y^i=0$ to obtain a new feasible solution with objective value strictly larger. But this contradicts the fact that $y$ is an optimal solution. Thus, the claim holds and by setting $\lambda_i=0,\lambda_j=1$ and $y^i=0$ we obtain a new feasible solution with the same objective value, and hence optimal. In this case, we set $i_0=j$.

Consider now the primal-dual pair
\begin{align}
\beta:=&\inf\{\pi^{\top}x\, | \ \ A^{i_0}x\succeq_{\mathcal{L}_{m_{i_0}}} b^{i_0} \},\label{newprimal}\\
&\sup\{(b^{i_0})^{\top}y^{i_0}\, | \ (y^{i_0})^{\top}A^{i_0} = \pi^{\top}, \ y^{i_0}\in \mathcal{L}^*_{m_{i_0}}\}.\label{newdual}
\end{align}
Let $x^*$ be an $\varepsilon$-optimal solution to the original primal (\ref{primal}), that is $x^*\in W$ and $\pi^{\top}x^*\leq \alpha+\varepsilon$. Clearly, $x^*$ is feasible for (\ref{newprimal}). Note now that the dual solution constructed above for (\ref{dual}), when restricted to the $y^{i_0}$ component is a feasible solution to (\ref{newdual}) with objective value $\alpha$. Thus, we have $\alpha \leq \beta \leq \pi^{\top}x^*\leq \alpha+\varepsilon$, where the first inequality follows from weak duality to the primal-dual pair (\ref{newprimal}-\ref{newdual}) and the second inequality follows from fisibility of $x^*$ to (\ref{newprimal}). By taking the limit as $\varepsilon$ goes to zero, we obtain (\ref{primal2}).

To prove the second part of the lemma, we first observe that $\recdual{W^{i_0}}\subseteq \recdual{W}$. If $\pi\notin \recdual{W^{i_0}}$, then (\ref{primal2}) would be unbounded by Lemma~\ref{lemma:ConditionBoundedConicProgram}. As $\pi\notin \interior{(\recdual{W})}$, we have that $\pi\notin \interior{(\recdual{W^{i_0}})}$. Hence, $\pi\in \recdual{W^{i_0}}\setminus\interior{(\recdual{W^{i_0}})}.$

Now, $W^{i_0}$ cannot define an ellipse because then $W\subseteq W^{i_0}$ would be bounded.
Since $\pi\in\recdual{W^{i_0}}\setminus\textup{interior}(\recdual{W^{i_0}})$, if $W^{i_0}$ was a parabola, then problem (\ref{primal2}) would be unbounded in view of Lemma~\ref{lemma:ParabolaUnboundedProblem}. Therefore, only two possibilities remain:\\
(i) $W^{i_0}$ is defined by a linear inequality, say $\mu^{\top}x\geq \mu_0$. In this case $\mu$ must be a multiple of $\pi$, otherwise problem (\ref{primal2}) would be unbounded. Thus, we may assume $\pi=\mu$ and then $\mu_0=\alpha$.\\
(ii) $W^{i_0}$ is one branch of a hyperbola. In this case, $\rec{W^{i_0}}$ is defined by the asymptotes of the hyperbola. As $\pi\in\recdual{W^{i_0}}\setminus\textup{interior}(\recdual{W^{i_0}})$,  $\pi$  must be orthogonal to one of the asymptotes.
\end{proof}

Next we prove Lemma~\ref{lemma:ConicRepresentationHyperbola} that was stated in Section~\ref{sec:cuts}.
\begin{proof}~\emph{of Lemma~\ref{lemma:ConicRepresentationHyperbola}}
Any conic section (parabola, ellipse, hyperbola) in $\mathbb{R}^2$ is a curve defined by a quadratic equation of the form
\begin{align}
\frac{1}{2}x^{\top}Qx+d^{\top}x+s=0,\label{eq:hyperbola}
\end{align}
where $s$ is a scalar, $d\in\mathbb{R}^2$ and $Q=VDV^{\top}$. In this factorization, $V\in\mathbb{R}^{2\times 2}$ is orthonormal and 
$$D=\begin{bmatrix}
\lambda_1 & 0 \\ 
0 & \lambda_2
\end{bmatrix}, $$
where $\lambda_1, \lambda_2$ are the eigenvalues of $Q$. In particular, the curve defined by (\ref{eq:hyperbola}) is a hyperbola if and only if one of these eigenvalues is positive and the other is negative. After changing variables
$
y:=V'x
$
and completing squares, equation (\ref{eq:hyperbola}) can be written in exactly one of the following forms
\begin{align}
%
[\beta_1(y_1-\alpha_1)]^2-[\beta_2(y_2-\alpha_2)]^2=\pm\eta^2,
\label{eq:CanonicHyperbola}
\end{align}
where $\eta$ and $\alpha_i, \beta_i$, for $i=1,2$, are constants depending on the coefficients of (\ref{eq:hyperbola}). In what follows, we assume that the coefficient of $\eta^2$ is positive. If it was negative, then we could multiply (\ref{eq:CanonicHyperbola}) by $-1$ and all we will do next would be analogous.
Under this assumption, one branch of the hyperbola is given by
\begin{align*}
G^+&:=\{y\in\mathbb{R}^2\, | \ (\eta)^2+[\beta_2(y_2-\alpha_2)]^2\leq [\beta_1(y_1-\alpha_1)]^2, \ \beta_1(y_1-\alpha_1)\geq 0\}\\
&=\{y\in\mathbb{R}^2\, | \ \sqrt{\eta^2+[\beta_2(y_2-\alpha_2)]^2}\leq \beta_1(y_1-\alpha_1)\}\\
&=\{y\in\mathbb{R}^2\, | \ (\eta,\beta_2(y_2-\alpha_2),\beta_1(y_1-\alpha_1))\in\mathcal{L}^3\}\\
&=\{y\in\mathbb{R}^2\, | \ \begin{bmatrix}
0 & 0 \\ 
0 & \beta_2 \\ 
\beta_1 & 0
\end{bmatrix}
\begin{bmatrix}
y_1 \\ 
y_2
\end{bmatrix}
\succeq_{\mathcal{L}^3}\begin{bmatrix}
-\eta \\ 
\beta_2\alpha_2 \\
\beta_1\alpha_1
\end{bmatrix}\}.
\end{align*}
Then, going back to the space of the original variables we obtain
\begin{align*}
G^+=\{x\in\mathbb{R}^2\, | \ \begin{bmatrix}
0 & 0 \\ 
\beta_2v_{12} & \beta_2v_{22} \\ 
\beta_1v_{11} & \beta_1v_{21}
\end{bmatrix}
\begin{bmatrix}
x_1 \\ 
x_2
\end{bmatrix}
\succeq_{\mathcal{L}^3}\begin{bmatrix}
-\eta \\ 
\beta_2\alpha_2 \\
\beta_1\alpha_1
\end{bmatrix}\},
\end{align*}
where $v_{ij}$ are the entries of the matrix $V$.
The other branch of the hyperbola is given by
\begin{align*}
G^-&:=\{y\in\mathbb{R}^2\, | \ (\eta)^2+[\beta_2(y_2-\alpha_2)]^2\leq [\beta_1(y_1-\alpha_1)]^2, \ \beta_1(y_1-\alpha_1)\leq 0\}.
\end{align*}
After the change of variables
$\tilde{y}:=-y$
we obtain
\begin{align*}
G^-&=\{\tilde{y}\in\mathbb{R}^2\, | \ (\eta)^2+[\beta_2(-\tilde{y}_2-\alpha_2)]^2\leq [\beta_1(-\tilde{y}_1-\alpha_1)]^2, \ \beta_1(-\tilde{y}_1-\alpha_1)\leq 0\}\\
&=\{\tilde{y}\in\mathbb{R}^2\, | \ (\eta)^2+[\beta_2(\tilde{y}_2+\alpha_2)]^2\leq [\beta_1(\tilde{y}_1+\alpha_1)]^2, \ \beta_1(\tilde{y}_1+\alpha_1)\geq 0\}\\
&=\{\tilde{y}\in\mathbb{R}^2\, | \ (\eta,\beta_2(\tilde{y}_2+\alpha_2),\beta_1(\tilde{y}_1+\alpha_1))\in\mathcal{L}^3\}\\
&=\{\tilde{y}\in\mathbb{R}^2\, | \ \begin{bmatrix}
0 & 0 \\ 
0 & \beta_2 \\ 
\beta_1 & 0
\end{bmatrix}
\begin{bmatrix}
\tilde{y}_1 \\ 
\tilde{y}_2
\end{bmatrix}
\succeq_{\mathcal{L}^3}\begin{bmatrix}
-\eta \\ 
-\beta_2\alpha_2 \\
-\beta_1\alpha_1
\end{bmatrix}\}.
\end{align*}
Going back to the space of the original variables we obtain
\begin{align*}
G^-=\{x\in\mathbb{R}^2\, | \ \begin{bmatrix}
0 & 0 \\ 
-\beta_2v_{12} & -\beta_2v_{22} \\ 
-\beta_1v_{11} & -\beta_1v_{21}
\end{bmatrix}
\begin{bmatrix}
x_1 \\ 
x_2
\end{bmatrix}
\succeq_{\mathcal{L}^3}\begin{bmatrix}
-\eta \\ 
-\beta_2\alpha_2 \\
-\beta_1\alpha_1
\end{bmatrix}\}.
\end{align*}
It follows from (\ref{eq:CanonicHyperbola}) that the asymptotes of $G^+$ have equations
\begin{align*}
\beta_1y_1+\beta_2y_2&=\beta_1\alpha_1+\beta_2\alpha_2,\\
\beta_1y_1-\beta_2y_2&=\beta_1\alpha_1-\beta_2\alpha_2.
\end{align*}
In the space of $x$ variables they become
\begin{align}
(\beta_1v_{11}+\beta_2v_{12})x_1+(\beta_1v_{21}+\beta_2v_{22})x_2&=\beta_1\alpha_1+\beta_2\alpha_2,\label{eq:asymptotes1}\\
(\beta_1v_{11}-\beta_2v_{12})x_1+(\beta_1v_{21}-\beta_2v_{22})x_2&=\beta_1\alpha_1-\beta_2\alpha_2.\nonumber 
\end{align}
The asymptotes of $G^-$ are obtained in a similar way.
\end{proof}

\begin{lemma}\label{lemma:CustForGeneralHyperbola}
Let $G$ be one branch of a non-degenerate hyperbola in $\mathbb{R}^2$. Let $\pi^{\top}x\geq \pi_0$ be a face of $G^I$ such that $\pi\in\mathbb{Z}^2$ is non-zero and orthogonal to one of the asymptotes. Then $\pi^{\top}x\geq \pi_0$ is one of the inequalities (\ref{CutsForHyperbola}).
\end{lemma}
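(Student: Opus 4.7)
The plan is to show that, up to rescaling by a positive integer, the face inequality $\pi^\top x \geq \pi_0$ coincides with the primitive form of (\ref{CutsForHyperbola}) for the value of $j \in \{1,2\}$ corresponding to the asymptote perpendicular to $\pi$. Without loss of generality I take $j=1$ (the $j=2$ case being symmetric after swapping $u^1$ and $u^2$). I proceed in three steps: a reduction to primitive form, a lower bound on $\pi_0$, and an achievement claim; I expect the last to be the delicate part.

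First, with $a := A_{21}+A_{31}$, $c := A_{22}+A_{32}$, $d := b_2+b_3$, the asymptote (\ref{eq:aa61}) reads $ax_1+cx_2 = d$. Since $\pi \in \mathbb{Z}^2$ is orthogonal to this line, $(a,c)$ has a rational direction, and after rescaling $A$ and $b$ by a common positive factor (which preserves $G$ and the primitive form of (\ref{CutsForHyperbola})) I may assume $a,c\in\mathbb{Z}$ and set $\tau^1 := \gcd(a,c)$, $\pi^* := (a/\tau^1, c/\tau^1)\in\mathbb{Z}^2$. Because the branch $G$ is strictly on the side $ax_1+cx_2 > d$ of its asymptote and $\pi^\top x \geq \pi_0$ is valid for $G^I$, $\pi$ must be a positive integer multiple of $\pi^*$. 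It therefore suffices to prove the primitive statement $\min\{(\pi^*)^\top x : x\in G^I\} = f_{\gamma^1}(b)$ for $\gamma^1 = (0,1/\tau^1,1/\tau^1) \in \Gamma_1$, which, after multiplying through by $\tau^1$, is exactly (\ref{CutsForHyperbola}) for $j=1$.

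Second, I would establish the lower bound. For every $x \in G^I \subseteq G \cap \mathbb{Z}^2$, non-degeneracy of the hyperbola gives $(\pi^*)^\top x > d/\tau^1$ (a branch never meets its asymptote), and $\pi^* \in \mathbb{Z}^2$ forces $(\pi^*)^\top x \in \mathbb{Z}$. Hence the minimum is at least $\lceil d/\tau^1 \rceil$ when $d/\tau^1 \notin \mathbb{Z}$, and at least $d/\tau^1 + 1$ when $d/\tau^1 \in \mathbb{Z}$. Unpacking (\ref{OurFormula}) at $v=b$ shows that $f_{\gamma^1}(b)$ takes exactly these two values in the two cases: the first clause activates precisely when $(\gamma^1)^\top b = d/\tau^1 \in \mathbb{Z}$, and it applies because $b_1\neq 0$ under the non-degeneracy hypothesis (compare with the derivation of Lemma~\ref{lemma:ConicRepresentationHyperbola}, where $b_1 = -\eta \neq 0$).

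The main obstacle is the matching upper bound: producing an integer point $x^* \in G$ with $(\pi^*)^\top x^* = k$, where $k$ is the candidate value above. Let $L := \{x \in \mathbb{R}^2 : (\pi^*)^\top x = k\}$; this line is parallel to the asymptote and lies strictly on the $G$-side of it. I would invoke the standard fact that, for a non-degenerate hyperbola, any line parallel to (and distinct from) one of the asymptotes on the branch side meets the convex region delimited by the branch in an unbounded ray — it crosses the curve once and then recedes to infinity along the asymptotic direction while remaining inside $G$. Since $\pi^*$ is primitive and $k \in \mathbb{Z}$, Bezout's identity yields at least one integer point on $L$, and the full set of integer points on $L$ is then an infinite arithmetic progression along $L$. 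An unbounded ray of $L$ therefore contains one of these integer points, providing the required $x^* \in G \cap \mathbb{Z}^2$ with $(\pi^*)^\top x^* = k$. Combining the three steps identifies the primitive form of $\pi^\top x \geq \pi_0$ with the primitive form of (\ref{CutsForHyperbola}) for $j=1$, which is what the lemma claims.
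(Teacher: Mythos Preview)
Your proposal is correct and follows essentially the same argument as the paper's own proof: both identify the primitive normal $\pi^*$ to the relevant asymptote, use non-degeneracy to get the strict lower bound $(\pi^*)^\top x > d/\tau$, split into the two cases $d/\tau\in\mathbb{Z}$ and $d/\tau\notin\mathbb{Z}$, and then achieve the bound by observing that a line parallel to (and strictly on the branch side of) the asymptote meets $G$ in a ray, which must contain integer points because the line is defined by a primitive integer vector with integer right-hand side. Your write-up is in fact slightly more explicit than the paper's in one respect: you spell out why $b_1\neq 0$ (via $b_1=-\eta$ from Lemma~\ref{lemma:ConicRepresentationHyperbola}), which is exactly what makes the first clause of $f_{\gamma^1}$ fire and yields $f_{\gamma^1}(b)=d/\tau^1+1$ in the integral case---the paper instead just appeals to Proposition~\ref{prop:1} at the end.
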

\begin{proof}
Using the same notation adopted in the proof of Lemma~\ref{lemma:ConicRepresentationHyperbola} above, we assume $G=G^+$. If $G=G^-$, then the proof is analogous. Note that $G$ is contained in the set
\begin{align*}
H:=\{x\in\mathbb{R}^2\, | \ (\beta_1v_{11}+\beta_2v_{12})x_1+(\beta_1v_{21}+\beta_2v_{22})x_2&\geq\beta_1\alpha_1+\beta_2\alpha_2, \\ 
(\beta_1v_{11}-\beta_2v_{12})x_1+(\beta_1v_{21}-\beta_2v_{22})x_2&\geq\beta_1\alpha_1-\beta_2\alpha_2\}.
\end{align*}
Assume $\pi$ is orthogonal to the asymptote (\ref{eq:asymptotes1}). The proof of the case in which $\pi$ is orthogonal to the second asymptote is similar. Since $\pi\in\mathbb{Z}^2$ is non-zero, we may assume that the coefficients of $x_1$ and $x_2$ in (\ref{eq:asymptotes1}) are integers. Let 
$$\tau:=\gcd\{\beta_1v_{11}+\beta_2v_{12},\beta_1v_{21}+\beta_2v_{22}\}.$$
Since the hyperbola is non-degenerate, the line
\begin{align*}
(\beta_1v_{11}+\beta_2v_{12})x_1+(\beta_1v_{21}+\beta_2v_{22})x_2= \beta_1\alpha_1+\beta_2\alpha_2
\end{align*}
does not intersect $G$. However,  for all $\varepsilon>0$, the equation
\begin{align}
\frac{\beta_1v_{11}+\beta_2v_{12}}{\tau}x_1+\frac{\beta_1v_{21}+\beta_2v_{22}}{\tau}x_2= \frac{\beta_1\alpha_1+\beta_2\alpha_2}{\tau}+\varepsilon\label{eq:LatticeFree}
\end{align}
intersects $G$ along a ray. Moreover, (\ref{eq:LatticeFree}) has integral solutions if and only if the right-hand-side is integral.

Therefore, if $(\beta_1\alpha_1+\beta_2\alpha_2)/\tau\in \mathbb{Z}$, then the inequality 
\begin{align}
\frac{\beta_1v_{11}+\beta_2v_{12}}{\tau}x_1+\frac{\beta_1v_{21}+\beta_2v_{22}}{\tau}x_2\geq  \frac{\beta_1\alpha_1+\beta_2\alpha_2}{\tau}+1\label{ineq:FirstFacet}
\end{align}
is a face of $G^I$, and hence it is equivalent to $\pi^{\top}x\geq \pi_0$. On the other hand, if  $(\beta_1\alpha_1+\beta_2\alpha_2)/\tau \notin \mathbb{Z}$, then
\begin{align}
\frac{\beta_1v_{11}+\beta_2v_{12}}{\tau}x_1+\frac{\beta_1v_{21}+\beta_2v_{22}}{\tau}x_2\geq  \left\lceil\frac{\beta_1\alpha_1+\beta_2\alpha_2}{\tau}\right\rceil\label{ineq:SecondFacet}
\end{align}
is a face of $G^I$, and hence it is equivalent to $\pi^{\top}x\geq \pi_0$. 

Observe now that (\ref{ineq:FirstFacet}) and (\ref{ineq:SecondFacet}) are one of the inequalities (\ref{CutsForHyperbola}) in view of Proposition~\ref{prop:1}.
\end{proof}

Next we use Lemma~\ref{lemma:OnlyOneConicIsEnough} and Lemma~\ref{lemma:CustForGeneralHyperbola} above to proof Theorem~\ref{thm:2d}.
\begin{proof}~\emph{of Theorem~\ref{thm:2d}} 
First, we observe that if $W$ is bounded, then the result follows directly from Theorem~\ref{thm:PolyhedralOuterApproxOfBoundedConicCuts}.
Suppose $W$ is unbounded. We have two cases:

\noindent\textbf{Case 1:} $W\cap \mathbb{Z}^2 = \emptyset$. 
In this case, there exist $\pi=(\pi_1,\pi_2)$ with $\pi_1, \pi_2$ integer relatively prime and a integer $\pi_0$ such that \cite{Dey2010, BasuCCZ10}
\begin{align}
W\subseteq \{x\in\mathbb{R}^2\, | \ \pi_0 \leq \pi^{\top}x \leq\pi_0+1\}.\label{854}
\end{align}
We will show that the cut $\pi^{\top} x  \geq  \pi_0+1$ can be obtained using subadditive functions (\ref{Linearcomposite}) or using one of the inequalities (\ref{CutsForHyperbola}). Analogous proof holds for the cut $\pi^{\top} x  \leq \pi_0$.
A consequence of $W$ being between these two lines is that $\rec{W}$ is orthogonal to $\pi$ and, therefore, $\pi\notin\interior{(\recdual{W})}$ in view of Lemma~\ref{lemma:InteriorOfDualCone}. Then, by Lemma~\ref{lemma:OnlyOneConicIsEnough},
\begin{align*}
\alpha:=\inf \{\pi^{\top}x\, | \ W^{i_0}\}=\inf \{\pi^{\top}x\, | \ x\in W\},
\end{align*}
for some $i_0\in[m]$, where there are only two possibilities for $W^{i_0}=\{x \in\mathbb{R}^2 \,| \  A^{i_0}x\succeq_{\mathcal{L}^{m_{i_0}}}b^{i_0}\}$:\\
\textbf{\textit{(i)}} $W^{i_0}$ is the half-space $\pi^{\top}x\geq \alpha$: In this case, since $A^{i_0}x\succeq_{\mathcal{L}^{m_{i_0}}}b^{i_0}$ is non-redundant, we have that the line $\pi^{\top}x=\alpha$ intersects $W$.
Note that $\pi_0\leq \alpha$ in view of (\ref{854}). Since $W$ is unbounded and its recession cone is orthogonal to $\pi$, if $\alpha=\pi_0$, then $W$ would contain a integer point from the line $\pi^{\top}x=\pi_0$. Therefore, $\alpha>\pi_0$ in which case $\pi^{\top}x\geq \lceil \alpha\rceil=\pi_0+1$ is a valid inequality for $W^I$ and this cut can be obtained using a subadditive function (\ref{Linearcomposite}).\\
\textbf{\textit{(ii)}} $W^{i_0}$ is a hyperbola whose one of the asymptotes is orthogonal to $\pi$: Without loss of generality, we may assume that the asymptote orthogonal to $\pi$ has equation $\pi^{\top}x=\alpha$. Let 
\begin{align}
\beta=\begin{cases}
\alpha+1 \ \text{if} \ \alpha\in\mathbb{Z}\\
\lceil\alpha\rceil \ \text{if} \ \alpha\notin\mathbb{Z}.
\end{cases}\label{beta}
\end{align}
Since the hyperbola is non-degenerate, we have that $\pi^{\top}x\geq \beta$ is a valid inequality for $(W^{i_0})^I$. Moreover, $\pi^{\top}x= \beta$ contains a ray of $W^{i_0}$ since $\beta>\alpha$. Then, since $\pi_1$ and $\pi_2$ are relatively prime and $\beta\in\mathbb{Z}$, we have that $\pi^{\top}x\geq \beta$ is, in addition, a face of $(W^{i_0})^I$. Now, it follows from Lemma~\ref{lemma:CustForGeneralHyperbola} that this face is one of the inequalities (\ref{CutsForHyperbola}). Finally, note that $\pi_0\leq \alpha< \pi_0+1$. Thus, we have that $\beta=\pi_0+1$.

\noindent\textbf{Case 2:} $\textup{interior}(W)\cap \mathbb{Z}^2 \neq \emptyset$. 
By assumption, the components of $\pi$ are integers and, without loss of generality, we may also assume they are relatively prime.
We now have three cases.
\begin{enumerate}
\item $\pi\notin\recdual{W}$: In this case, by Lemma~\ref{lemma:ConditionBoundedConicProgram}, we have that $\inf \{\pi^{\top} x\, | \ x \in W\}$ is unbounded. Since we assume that $\textup{interior}(W)\cap \mathbb{Z}^2 \neq \emptyset$, we obtain that  $\inf \{\pi^{\top} x\, | \ x \in W\cap \mathbb{Z}^2\}$ is unbounded \cite{DiegoSantanu2013}, which contradicts the fact that $\pi^{\top}x\geq \pi_0$ is a valid inequality for $W^I$.

\item $\pi\in\textup{interior}(\recdual{W})$ : 
In this case, $\{x\in W \, | \ \pi^{\top} x \leq \pi_0 \}$
is bounded in view of Proposition~\ref{prop:PolyhedralOuterApproxOfBoundedConicCuts}. Therefore, it follows from Theorem~\ref{thm:PolyhedralOuterApproxOfBoundedConicCuts} that the valid inequality $\pi^{\top}x\geq \pi_0$ can be obtained using functions (\ref{Linearcomposite}).

\item $\pi\in \recdual{W}\setminus\textup{interior}(\recdual{W})$:
Since $\textup{interior}(W)\cap \mathbb{Z}^2 \neq \emptyset$ and $\inf \{\pi^{\top} x\, | \ x \in W\cap \mathbb{Z}^2\}$ is bounded, we have that $\alpha:=\inf \{\pi^{\top} x\, | \ x \in W\}$ is bounded \cite{DiegoSantanu2013}. Then, by Lemma~\ref{lemma:OnlyOneConicIsEnough},
$
\alpha=\inf \{\pi^{\top}x\, | \ W^{i_0}\},
$
for some $i_0\in[m]$, where there are only two possibilities for $W^{i_0}=\{x \in\mathbb{R}^2 \,| \  A^{i_0}x\succeq_{\mathcal{L}^{m_{i_0}}}b^{i_0}\}$:\\
\textbf{\textit{(i)}} $W^{i_0}$ is the half-space $\pi^{\top}x\geq \alpha$: Since $A^{i_0}x\succeq_{\mathcal{L}^{m_{i_0}}}b^{i_0}$ is non-redundant, we have that the line $\pi^{\top}x=\alpha$ intersects $W$. Thus, $\pi^{\top}x\geq \lceil \alpha\rceil$ is a valid inequality for $W^I$ and this cut can be obtained using a subadditive function (\ref{Linearcomposite}). Now, we only need to show that $\lceil\alpha\rceil=\pi_0$. It is enough to show that the line $\pi^{\top}x= \lceil \alpha\rceil$ intersects $W \cap \mathbb{Z}^2$. Note that the line $\pi^{\top}x=\lceil\alpha\rceil$ intersects $W$ (otherwise we would have $W\subseteq \{x\in\mathbb{R}^2\, | \ \pi^{\top}x < \lceil\alpha\rceil\}$ which contradicts the fact that $W\cap \mathbb{Z}^2\neq \emptyset$ since $\pi^{\top}x\geq \lceil\alpha\rceil$ is valid inequality for $W^I$).
Thus, $\{x\in W \, | \ \pi^{\top}x=\lceil \alpha\rceil\}\neq \emptyset$. Moreover, since $\pi\in \recdual{W}\setminus\textup{interior}(\recdual{W})$, there exists a non-zero vector $d\in\rec{W}$ such that $\pi^{\top}d=0$. Therefore, $d$ is in the recession cone of $\{x\in W \, | \ \pi^{\top}x=\lceil \alpha\rceil\}$. Hence, $\pi^{\top}x=\lceil \alpha\rceil$ contains a ray of $W$. Thus,  $\pi^{\top}x=\lceil\alpha\rceil$ contains an integer point of $W$ since $\pi_1$ and $\pi_2$ are relatively prime.\\
\textbf{\textit{(ii)}} $W^{i_0}$ is a hyperbola one of whose asymptotes is orthogonal to $\pi$: As in Case 1 \textit{(ii)}, we can show that $\pi^{\top}x\geq \beta$ is a face of $W^{i_0}$, where $\beta$ is defined in (\ref{beta}). Moreover, by Lemma~\ref{lemma:CustForGeneralHyperbola}, $\pi^{\top}x\geq \beta$ is one of the inequalities (\ref{CutsForHyperbola}). Now, only remains to show that $\beta=\pi_0$. It is enough to show that $\pi^{\top}x= \beta$ intersects $W \cap \mathbb{Z}^2$. Clearly, $\pi^{\top}x\geq\beta$ is a valid inequality for $W^I\subseteq W^{i_0}$. Since $\alpha<\beta$, we have that the line $\pi^{\top}x= \beta$ intersects $W$ (otherwise we would have $W\subseteq \{x\in\mathbb{R}^2\, | \ \pi^{\top}x < \beta\}$ which contradicts the fact that $W\cap \mathbb{Z}^2\neq \emptyset$). Therefore, as in the case \textit{(i)} above, we can prove that $\pi^{\top}x = \beta$ contains a ray of $W$. Thus, $\pi^{\top}x=\beta$ contains an integer point of $W$ since $\pi_1$ and $\pi_2$ are relatively prime and $\beta\in\mathbb{Z}$.
\end{enumerate}
\end{proof}

\section*{Acknowledgments}

The authors thank to two anonymous referees for their helpful corrections and suggestions.

Funding: This work was supported by the NSF CMMI [grant number 1149400]; and the CNPq [grant number 248941/2013-5].


\section*{References}

\bibliography{SOCP_Sub_Funct_bibfile}

\end{document}